\newtheorem{thm}{Theorem}[section]
\newtheorem{lem}[thm]{Lemma}
\newtheorem{cor}[thm]{Corollary}
\newtheorem{prop}[thm]{Proposition}
\begin{document}

\title{The IC-indices of Some Complete Multipartite Graphs}
\author{ Chin-Lin Shiue \thanks{Department of Applied Mathematics, Chung Yuan Christian University,
Chung Li, Taiwan 32023. Research supported in part by NSC-99-2115-M033-001.
%{\tt E-mail:clshiue@math.cycu.edu.tw};
}
\and
Hui-Chuan Lu    \thanks {{\tt  Corresponding author}; Center for Basic Required Courses, National United University, Maioli, Taiwan 36003.
{\tt  E-mail:hjlu@nuu.edu.tw}.  Research supported in part by  NSC-102-2115-M-239-002.  }
}

\maketitle
\begin{abstract}
A coloring of a connected graph $G$ is a function $f$ mapping the vertex set of $G$ into the set of all integers.
For any subgraph $H$ of $G$, we denote the sum of the values of $f$ on the vertices of $H$ as $f(H)$.
If for any integer $k\in \{1,2,\cdots,f(G)\}$, there exists an induced connected subgraph $H$ of $G$
such that $f(H) = k$, then the coloring $f$ is called an IC-coloring of $G$.
The IC-index of $G$, denoted as $M(G)$, is the maximum value of $f(G)$ over all possible IC-colorings $f$ of $G$.
In this paper, we present  a useful method from which a lower bound 
on the IC-index of any complete multipartite graph can be derived.
Subsequently, we show that, for $m\geq 2 ~\mbox{and} ~n\geq 2$, our lower bound on $M(K_{1(n),m})$ is the exact value of it. 

%Include keywords, PACS and mathematical
%subject classification numbers as needed.

% \PACS{PACS code1 \and PACS code2 \and more}
% \subclass{MSC code1 \and MSC code2 \and more}
\end{abstract}
\noindent
 {\it Keywords}: IC-coloring; IC-index; complete multipartite graph%; split graph

%\end{document}

\section{Introduction}
\label{intro}

 Given a connected simple graph $G$, a \textit{coloring} of $G$ is a function $f$ mapping $V(G)$ into $\Bbb N $.
       For any subgraph $H$ of $G$, we denote the sum $\sum_{v\in V(H)}f(v)$ as $f(H)$. If for any integer $k\in \{1,2,3,\cdots,f(G)\}$, there exists an induced connected subgraph $H$ of $G$ such that $f(H) = k$, then the coloring $f$ is called an
       \textit{IC-coloring} of $G$ .
       Every connected graph $G $ admits a trivial IC-coloring which assigns the value 1 to every vertex of $G$. 
%The problem of finding an IC-coloring with the highest value of $f(G)$ arose naturally. 
The highest possible value of $f(G)$ is referred to as the \textit{IC-index} of a graph $G$, denoted as $M(G)$, that is,
       %\[\begin{array}{clr}
      $$ M(G) = \max\{f(G) ~|~ f ~\mbox{is an IC-coloring of} ~G\}.$$
      % \end{array}\]
       An IC-coloring $f$ satisfying $f(G) = M(G)$ is called a \textit{maximal IC-coloring}
       of $G$. In this paper, we only consider simple graphs. For the terminologies and
       notations in graph theory, please refer to \cite{west}.

The problem of IC-coloring of finite graphs originated from the postage stamp problem in number theory, which has been studied in some literature \cite{AB,Guy,HL,LW,Lun,Mos,SLK,Sel,SKM,St}.      
%Determining the exact values of the IC-index of a graph is challenging. In the past decades, not much achievements have been made.
In 1992, Glenn Chappel formulated the IC-coloring problem as a "subgraph sums problem" and  showed  that $M(C_n)\leq n^2-n+1$. Later, in 1995,  Penrice \cite{Pen} introduced the IC-coloring
       as the stamp covering and showed that $M(K_n)=2^n-1$ for $n\geq 1$ and $M(K_{1,n})=2^n+2$ for $n\geq 2$. In 2005, Salehi et. al.\cite{SLK} proved that $M(K_{2,n})=3\cdot 2^n+1$ for $n\geq 2$. Along with the result by Shiue and Fu \cite{SF}   who showed that $M(K_{m,n})=3\cdot 2^{m+n-2}-2^{m-2}+2$, for $2\leq m \leq n$, in 2008, the problem regarding complete bipartite graphs was completely settled. In this present paper, we deal with \textit{complete multipartite graphs}. A complete multipartite graph $K_{m_1,m_2,\cdots, m_k}$ is a graph whose vertex set can be partitioned into $k$ partite sets $V_1,V_2,\cdots, V_k$, where $|V_i|=m_i$ for all $i \in \{1, 2, \cdots, k\}$, such that there are no edges within each $V_i$ and any two vertices from different partite sets are adjacent. A complete multipartite graph with $k$ partite sets is called a \textit{complete $k$-partite graph} as well. We also denote as $K_{1(n),m_{n+1}, m_{n+2}, \cdots, m_k}$, $n\le k$, the complete $k$-partite graphs in which there are $n$ partite sets which are of size one and the rest $(k-n)$ partite sets have sizes $m_{n+1},m_{n+2},\cdots,$ and $m_{k}$. Therefore $K_{1(n)}$ represents the complete graph $K_{n}$.

In this paper,  we introduce some useful lemmas in Section 2. In Section 3, our main results 
%$M(O_{m}\vee K_{n})= 2^{m+n}-2^{m}+1$ for $m\geq 2 ~\mbox{and} ~n\geq 2$
are presented. We start with a useful proposition which gives a lower bound on the IC-index of the join of an independent set and a given connected graph. Consequently, a lower bound on the IC-index of any complete mutlipartite graph can be deduced. We shall show that our lower bound on $M(K_{1(n),m})$ is indeed the exact value of it for $m\geq 2 ~\mbox{and} ~n\geq 2$.
A concluding remark will be given in Section 4.

\section{Preliminaries}
\label{sec:2}

       Some basic know results from  \cite{SF} are introduced in this section. They are very useful in the discussion of our main results. For brevity, we let $[1,\ell]$, $\ell \in \Bbb N$, denote the set $\{1,2,\cdots,\ell\}$. A sequence consisting of $0$'s and $1$'s is called a \textit{binary sequence}.
        %%lemma2.1
        \noindent
        \begin{lem}\cite{SF}\label{lemma 2.1}
        If $a_{1}, a_{2},\cdots, a_{n}$ are $n$ positive integers which satisfy that
        $a_{1}=1$ and $a_{i}\leq a_{i+1}\leq \sum_{j=1}^{i}a_{j}+1$ for all $i \in [1,n-1].$
        Then, for each $\ell\in [1,\sum_{j=1}^{n}a_{j}]$, there exists a binary sequence $c_{1}, c_{2},\cdots, c_{n}$
        such that $\ell=\sum_{j=1}^{n}c_{j}a_{j}$.
        \end{lem}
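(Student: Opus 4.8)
The plan is to prove this by induction on $n$, using the partial sums $S_i=\sum_{j=1}^{i}a_j$ as the natural bookkeeping device. The statement to be established is that the set of achievable subset sums $\{\sum_{j=1}^{n}c_ja_j : c_j\in\{0,1\}\}$ contains the entire interval $[1,S_n]$ of integers. For the base case $n=1$ we have $a_1=1$ and $S_1=1$, so the only value to realize is $\ell=1$, achieved by $c_1=1$. This is immediate.

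For the inductive step I would assume that the $(n-1)$-term sequence $a_1,\dots,a_{n-1}$ already realizes every integer in $[1,S_{n-1}]$ as a binary subset sum, and then show the $n$-term sequence realizes every integer in $[1,S_n]=[1,S_{n-1}+a_n]$. I split into two cases according to the choice of $c_n$. First, any $\ell\in[1,S_{n-1}]$ is handled by setting $c_n=0$ and invoking the induction hypothesis on the truncated sequence. Second, for $\ell$ in the upper range I set $c_n=1$ and try to represent the remainder $\ell-a_n$ using $a_1,\dots,a_{n-1}$: if $\ell=a_n$ the remainder is $0$ (take all earlier $c_j=0$), and if $\ell-a_n\in[1,S_{n-1}]$ the induction hypothesis again applies. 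Thus the choice $c_n=1$ covers the interval $[a_n,\,a_n+S_{n-1}]=[a_n,S_n]$.

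The crux of the argument, and the only place where the hypothesis is genuinely used, is verifying that these two realizable intervals together exhaust $[1,S_n]$. The values with $c_n=0$ cover $[1,S_{n-1}]$ and the values with $c_n=1$ cover $[a_n,S_n]$; their union is all of $[1,S_n]$ precisely when there is no integer gap between them, i.e. when $a_n\le S_{n-1}+1$. This is exactly the upper bound $a_{i+1}\le\sum_{j=1}^{i}a_j+1$ specialized to $i=n-1$, so the overlap condition holds automatically. I expect this interval-covering check to be the conceptual heart of the proof, even though it is computationally trivial: it is what forces the growth condition on the $a_i$ and explains why a weaker bound would leave uncoverable integers. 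Note that the monotonicity hypothesis $a_i\le a_{i+1}$ is not needed for this particular argument; only the condition $a_1=1$ (to start the induction) and the upper bounds (to close the gaps) are invoked, so I would simply carry the full hypothesis through and remark that the upper-bound half suffices.
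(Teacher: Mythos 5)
Your proof is correct: the induction is sound, the case split on $c_n$ covers $[1,S_{n-1}]\cup[a_n,S_n]$, and the gap condition $a_n\le S_{n-1}+1$ is exactly the hypothesis at $i=n-1$; your side remark that monotonicity $a_i\le a_{i+1}$ is never used is also accurate. Note that the paper itself gives no proof of this lemma --- it is quoted from the reference [SF] (Shiue and Fu) --- and your induction is the standard argument given there, so there is nothing genuinely different to compare.
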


        %%lemma2.2
        \noindent
        \begin{lem}\cite{SF}
        If $s_{0},s_{1},\cdots,s_{n}$ is a sequence of integers, then for each $i\in [1,n]$ there exists an integer $r_{i}\in \mathbb{Z}$ such that
        $s_{i}=\sum_{j=0}^{i-1}s_{j}+r_{i}$ and $\sum_{j=0}^{n}s_{j}=
        2^{n}s_{0}+\sum_{j=1}^{n}2^{n-j} r_{j}.  $
        \end{lem}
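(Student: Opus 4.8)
The plan is to establish the two assertions separately: the first is essentially a definition, while the second follows by solving a simple first-order recurrence satisfied by the partial sums of the sequence.

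First I would dispose of the existence claim by simply defining, for each $i \in [1,n]$, the quantity $r_i := s_i - \sum_{j=0}^{i-1} s_j$. Since every $s_j$ is an integer, so is $r_i$, and by construction we have $s_i = \sum_{j=0}^{i-1} s_j + r_i$ exactly as required. No further argument is needed for this part.

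For the summation identity I would introduce the partial sums $S_i := \sum_{j=0}^{i} s_j$ for $0 \le i \le n$, so that $S_0 = s_0$ and the quantity to be evaluated is precisely $S_n$. The key observation is that the defining relation $s_i = S_{i-1} + r_i$ yields $S_i = S_{i-1} + s_i = 2S_{i-1} + r_i$ for every $i \in [1,n]$. Thus the partial sums obey a first-order linear recurrence with constant multiplier $2$, driven by the increments $r_i$.

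The remaining step is to solve this recurrence, which I would carry out by induction on $i$ (equivalently, by unrolling it). The inductive claim is $S_i = 2^{i} s_0 + \sum_{j=1}^{i} 2^{i-j} r_j$: the base case $i=0$ is just $S_0 = s_0$, and assuming the formula for $S_{i-1}$, multiplying by $2$ and adding $r_i$ gives $2S_{i-1} + r_i = 2^{i} s_0 + \sum_{j=1}^{i-1} 2^{i-j} r_j + r_i = 2^{i} s_0 + \sum_{j=1}^{i} 2^{i-j} r_j$, which is the formula for $S_i$. Setting $i=n$ then produces the stated identity $\sum_{j=0}^{n} s_j = 2^{n} s_0 + \sum_{j=1}^{n} 2^{n-j} r_j$. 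I do not expect a genuine obstacle here; the only point demanding mild care is the index bookkeeping in the induction, since the exponent $2^{i-j}$ must shift consistently when $S_{i-1}$ is doubled and the term $r_i = 2^{0} r_i$ is absorbed into the sum.
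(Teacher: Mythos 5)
Your proof is correct: defining $r_i := s_i - \sum_{j=0}^{i-1}s_j$ makes the first claim immediate, and unrolling the recurrence $S_i = 2S_{i-1} + r_i$ by induction yields the identity exactly as stated. The paper itself gives no proof (the lemma is quoted from the reference of Shiue and Fu), and your argument is the standard one that the cited source uses, so there is nothing to add.
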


        %%lemma2.3
        \noindent
        \begin{lem}\cite{SF}
        Let $V(G)=\{u_{1},u_{2},\cdots,u_{n}\}$. If $f$ is an IC-coloring of G such that $f(u_{i})\leq f(u_{i+1})$ ~for all $i \in [1,n-1]$,
        then $f(u_{1})=1$ and $f(u_{i+1})\leq \sum_{j=1}^{i}f(u_{j})+1$ for all $i \in [1,n-1]$.
        \end{lem}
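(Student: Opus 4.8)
The plan is to use the definition of an IC-coloring directly, leaning on the single fact that every target value in $[1,f(G)]$ must be realized as the $f$-sum over some induced connected subgraph, combined with the monotonicity $f(u_1)\le f(u_2)\le\cdots\le f(u_n)$. First I would settle $f(u_1)=1$. Since $f$ maps into $\mathbb{N}$, every vertex carries a value of at least $1$, so any induced subgraph on two or more vertices has $f$-sum at least $2$. Because the IC-coloring must realize the target $k=1$, the only possibility is a single vertex of value $1$; hence $\min_v f(v)\le 1$, and as this minimum equals $f(u_1)\ge 1$, we conclude $f(u_1)=1$.

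For the upper bound on $f(u_{i+1})$ I would argue by contradiction. Fix $i\in[1,n-1]$ and suppose $f(u_{i+1})>\sum_{j=1}^{i}f(u_j)+1$. Set $k=\sum_{j=1}^{i}f(u_j)+1$. The first step is to verify that $k$ is a legitimate target, i.e. $k\le f(G)$: since $i\le n-1$ the tail sum is nonempty, so $f(G)=\sum_{j=1}^{i}f(u_j)+\sum_{j=i+1}^{n}f(u_j)\ge \sum_{j=1}^{i}f(u_j)+f(u_{i+1})>\sum_{j=1}^{i}f(u_j)+1=k$. Thus the defining property of an IC-coloring applies to $k$, giving an induced connected subgraph $H$ with $f(H)=k$.

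The crux is then to extract a contradiction from the sorted order. Because the values are nondecreasing, each of $u_{i+1},\ldots,u_n$ has value at least $f(u_{i+1})>k$, so $H$ can contain none of them without forcing $f(H)>k$; hence $V(H)\subseteq\{u_1,\ldots,u_i\}$. But then $f(H)\le\sum_{j=1}^{i}f(u_j)=k-1<k$, contradicting $f(H)=k$. This rules out the assumed violation for every $i$, yielding $f(u_{i+1})\le\sum_{j=1}^{i}f(u_j)+1$.

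I expect the only genuine care-point to be the boundary check $k\le f(G)$, so that the IC-coloring hypothesis really does force a subgraph summing to $k$; everything else follows cleanly, since the sorted order makes every ``large'' vertex individually exceed the target. It is worth noting that connectivity of $H$ is never used—only that $V(H)$ is a subset of $V(G)$—so the conclusion is fundamentally a statement about which partial sums are attainable, and this is precisely the hypothesis shape that feeds into Lemma~\ref{lemma 2.1}.
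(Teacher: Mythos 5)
Your proof is correct and is essentially the standard argument for this lemma, which the paper itself only cites from \cite{SF} without reproving: the target $k=1$ forces a single vertex of value $1$, and a subgraph realizing $k=\sum_{j=1}^{i}f(u_j)+1$ could neither contain any of $u_{i+1},\dots,u_n$ (each of value $>k$ by monotonicity) nor avoid them all (sum at most $k-1$). Your closing remark is also apt: connectivity plays no role, and the conclusion is exactly the hypothesis shape needed for Lemma~2.1.
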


        %%lemma2.4
        \noindent
        \begin{lem}\cite{SF}
        Let f be an IC-coloring of a graph G such that $f(u_{i}) < f(u_{i+1})$ ~for $i \in [1,n-1],$
        where $V(G)=\{u_{1},u_{2},\cdots,u_{n}\}.$ For each pair  $(i_{1},i_{2})$ where $1\leq i_{1} < i_{2}\leq n, $
        if $f(u_{i_{1}})=\sum_{j=1}^{i_{1}-1}f(u_{j})+1$ and $u_{i_{1}}u_{i_{2}}\notin E(G)$, then either
        $f(u_{i_{2}})\leq \sum_{j=1}^{i_{2}-1}f(u_{j})-f(u_{i_{1}})$ or $f(u_{i_{2}+1})\leq f(u_{i_{1}})+f(u_{i_{2}})$.
       % when $i_{2}+1\leq n.
        \end{lem}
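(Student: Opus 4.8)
The plan is to argue by contradiction. Suppose both displayed inequalities fail, so that
$f(u_{i_2}) \ge \sum_{j=1}^{i_2-1} f(u_j) - f(u_{i_1}) + 1$ and $f(u_{i_2+1}) \ge f(u_{i_1}) + f(u_{i_2}) + 1$. Writing $A = f(u_{i_1})$, $B = f(u_{i_2})$, and $S = \sum_{j=1}^{i_2-1} f(u_j)$ for brevity, I would single out the target value $k = A + B$. Since $k \le \sum_{j=1}^{i_2} f(u_j) \le f(G)$ and $k \ge 1$, the IC-coloring property guarantees a connected induced subgraph $H$ with $f(H) = k$. The whole argument then aims to show that no such $H$ can exist, which contradicts the assumption that $f$ is an IC-coloring and therefore forces at least one of the two inequalities to hold.

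The next step is to localize $H$ inside the initial segment $\{u_1, \dots, u_{i_2}\}$. Because the sequence of values is strictly increasing, the negation of the second inequality gives $f(u_j) \ge f(u_{i_2+1}) \ge k+1 > k$ for every $j \ge i_2+1$, so $H$ cannot contain any such vertex and hence $H \subseteq \{u_1, \dots, u_{i_2}\}$. Within this segment I would split on whether $u_{i_2} \in H$. If $u_{i_2} \notin H$, then $H \subseteq \{u_1, \dots, u_{i_2-1}\}$ and so $f(H) \le S$; but the negation of the first inequality says $A + B \ge S+1$, whence $k = A+B > S \ge f(H)$, contradicting $f(H)=k$.

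Therefore $u_{i_2} \in H$, and the remaining vertices of $H$ form a subset $T \subseteq \{u_1, \dots, u_{i_2-1}\}$ with $f(T) = k - B = A$. The crux, and the step I expect to be the main obstacle, is to prove that $T$ is forced to equal $\{u_{i_1}\}$. Here I would exploit the hypothesis $f(u_{i_1}) = \sum_{j=1}^{i_1-1} f(u_j) + 1$, that is $\sum_{j=1}^{i_1-1} f(u_j) = A - 1$. Strict monotonicity gives $f(u_j) > f(u_{i_1}) = A$ for every $j$ with $i_1 < j \le i_2-1$, so any vertex with index exceeding $i_1$ already overshoots the target; hence $T \subseteq \{u_1, \dots, u_{i_1}\}$. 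Since the vertices preceding $u_{i_1}$ total only $A-1 < A$, the value $A$ can be attained by such a $T$ only by taking $u_{i_1}$ by itself, so $T = \{u_{i_1}\}$ and $H = \{u_{i_1}, u_{i_2}\}$. But $u_{i_1} u_{i_2} \notin E(G)$, so this induced subgraph on two non-adjacent vertices is disconnected, the desired contradiction.

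Finally I would address the boundary case $i_2 = n$, in which $u_{i_2+1}$ does not exist and the second inequality is vacuous. In that case the localization step is automatic (there are no vertices beyond $u_{i_2}$), the same case analysis rules out every candidate $H$ with $f(H) = A+B$, and consequently the first inequality must hold outright. This confirms that the disjunction in the statement is meaningful for all admissible pairs $(i_1, i_2)$.
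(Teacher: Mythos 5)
Your proof is correct: taking $k=f(u_{i_1})+f(u_{i_2})$, invoking the IC-property to get a connected induced subgraph $H$ with $f(H)=k$, using strict monotonicity and the negated second inequality to confine $H$ to $\{u_1,\dots,u_{i_2}\}$, and using the hypothesis $f(u_{i_1})=\sum_{j=1}^{i_1-1}f(u_j)+1$ together with the negated first inequality to force $V(H)=\{u_{i_1},u_{i_2}\}$ --- which is disconnected since $u_{i_1}u_{i_2}\notin E(G)$ --- is exactly the standard argument, and your treatment of the boundary case $i_2=n$ is sound. Note that the paper itself states this lemma without proof, citing \cite{SF}; your argument is the natural one and matches the approach of that source.
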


         %%lemma2.5
        \noindent
        \begin{lem}\cite{SF}
        Let $r_{1},r_{2},\cdots,r_{n}$ be $n$ numbers. If there are two integers $i$ and $k$ such that $1\leq i< k\leq n$
        and $r_{i} < r_{k},$ then
        \[\begin{array}{clr}
        \sum_{j=1}^{n}2^{n-j}r_{j} < \sum_{j=1}^{n}2^{n-j}r_{j}-(2^{n-i}r_{i}+2^{n-k}r_{k})+(2^{n-i}r_{k}+2^{n-k}r_{i}).
        \end{array}\]
        \end{lem}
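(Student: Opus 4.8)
The plan is to recognize that the right-hand side of the asserted inequality is obtained from the left-hand side by interchanging the coefficients attached to $r_i$ and $r_k$; that is, it records the effect on the weighted sum $\sum_{j=1}^{n} 2^{n-j} r_j$ of swapping the two values $r_i$ and $r_k$ between positions $i$ and $k$, while leaving every other term untouched. Since the bulk of the sum is unaffected, the common term $\sum_{j=1}^{n} 2^{n-j} r_j$ can be cancelled from both sides, and the claim reduces to the single inequality
\[
2^{n-i} r_i + 2^{n-k} r_k \;<\; 2^{n-i} r_k + 2^{n-k} r_i .
\]

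Next I would transfer all four terms to one side and factor by grouping. Collecting the coefficients of the common difference $r_k - r_i$ yields
\[
\bigl(2^{n-i} r_k + 2^{n-k} r_i\bigr) - \bigl(2^{n-i} r_i + 2^{n-k} r_k\bigr)
= \bigl(2^{n-i} - 2^{n-k}\bigr)\bigl(r_k - r_i\bigr),
\]
so the entire statement is equivalent to the strict positivity of this product.

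Finally I would verify that each factor is positive under the stated hypotheses. Because $1 \le i < k \le n$ we have $n-i > n-k \ge 0$, whence $2^{n-i} > 2^{n-k}$ and the first factor is positive; the assumption $r_i < r_k$ makes the second factor positive as well. The product of two positive numbers is positive, which establishes the inequality.

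I do not anticipate any genuine obstacle here: the argument is a short exchange (rearrangement) computation, and the only point requiring a moment's care is the comparison of the weights $2^{n-i}$ and $2^{n-k}$, which rests entirely on the ordering $i < k$ rather than on any property of the numbers $r_j$ themselves. In particular, the $r_j$ need not be integers or positive; the conclusion holds for arbitrary reals satisfying $r_i < r_k$.
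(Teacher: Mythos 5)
Your proof is correct. The paper states this lemma without proof, citing it from the Shiue--Fu reference \cite{SF}; your reduction of the inequality to the factorization $(2^{n-i}-2^{n-k})(r_k-r_i)>0$, with positivity of the first factor from $i<k$ and of the second from $r_i<r_k$, is exactly the standard rearrangement argument one would expect, and it is complete as written.
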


        %%lemma2.6
        \noindent
        \begin{lem}\cite{SF}
        Let $f$ be an IC-coloring of a graph $G$. If $G$ has $\ell$ induced connected
        subgraphs and there are $2k$ distinct induced connected subgraphs $H_{1},G_{1},H_{2},G_{2},
        \cdots,H_{k}$, $G_{k}$ of $G$ such that
        $f(H_{i})=f(G_{i})$ for all $i \in [1,k],$ then $f(G)\leq \ell-k.$
        \end{lem}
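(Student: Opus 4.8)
The plan is to prove this by a direct double-counting argument resting on the defining property of an IC-coloring. The observation I would start from is that, since $f$ is an IC-coloring, every integer $v \in [1,f(G)]$ equals $f(H)$ for at least one induced connected subgraph $H$ of $G$. Writing $\mathcal{S}$ for the family of all induced connected subgraphs of $G$, this says the value map $H \mapsto f(H)$ sends $\mathcal{S}$ onto a set containing $[1,f(G)]$; so the $\ell = |\mathcal{S}|$ subgraphs together realize at least $f(G)$ distinct values.

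The key step is to delete one subgraph from each coincident pair and check that nothing is lost. I would set $\mathcal{S}' = \mathcal{S} \setminus \{G_1,\dots,G_k\}$. Because the $2k$ subgraphs $H_1,G_1,\dots,H_k,G_k$ are pairwise distinct, the $G_i$ are $k$ distinct elements of $\mathcal{S}$, whence $|\mathcal{S}'| = \ell - k$; moreover each partner $H_i$ survives in $\mathcal{S}'$, since $H_i \ne G_j$ for every $j$.

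Next I would show $[1,f(G)] \subseteq f(\mathcal{S}')$. Fix $v \in [1,f(G)]$ and pick $H \in \mathcal{S}$ with $f(H) = v$. If $H \notin \{G_1,\dots,G_k\}$, then $H \in \mathcal{S}'$ already realizes $v$; otherwise $H = G_i$ for some $i$, and then $H_i \in \mathcal{S}'$ satisfies $f(H_i) = f(G_i) = v$. In either case $v$ is realized within $\mathcal{S}'$. Hence $\mathcal{S}'$ realizes at least $f(G)$ distinct values using only $\ell - k$ subgraphs, and since each subgraph accounts for a single value this forces $f(G) \le |\mathcal{S}'| = \ell - k$.

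The one point I would flag as essential — rather than a genuine obstacle — is the distinctness of the $2k$ listed subgraphs. It is used twice: once to guarantee that removing the $G_i$ deletes a full $k$ members (giving the sharp count $\ell - k$), and once to guarantee that every partner $H_i$ is still present to cover the value previously supplied by $G_i$. If the pairs were permitted to overlap, both the count and the covering could break, so I would be careful to cite distinctness at each use. Notably, no result beyond the definition of an IC-coloring is needed for this lemma.
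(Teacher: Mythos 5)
Your proof is correct. The paper does not prove this lemma itself --- it is imported from Shiue and Fu \cite{SF} --- and your argument (delete one subgraph from each value-coincident pair, note the $2k$-wise distinctness guarantees both that exactly $k$ subgraphs are removed and that each partner $H_i$ survives to cover $f(G_i)$, then count: $\ell-k$ surviving subgraphs must realize all $f(G)$ distinct values $1,\dots,f(G)$) is precisely the standard counting proof of this statement.
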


\section{Main Result}
\label{sec:3}
 %In this section, we introduce our main result. Given an IC-coloring of a connected graph $G$, w
We start this section with a useful method for deriving a meaningful lower bound on the IC-index of the join of an independent set and a given graph. Subsequently, %We start with lower bounds on the IC-indices of complete multipartite graphs. 
we show that the lower bound on $M(K_{1(n),m})$ derived from our method, for $m\geq 2 ~\mbox{and} ~n\geq 2$, also serves as an upper bound on it. This determines the exact value of $M(K_{1(n),m})$.

\subsection{Lower Bounds on the IC-indices of Complete Multipartite Graphs}

For the derivation of  lower bounds, we view complete multipartite graphs as being generated by a graph operation starting with graphs with some vertices and no edges. The \textit{join} of two disjoint graphs $H_0$ and $H_1$, written $H_0\vee H_1$, is the graph with vertex set $V=V(H_0)\cup V(H_1)$ and edge set
     \[\begin{array}{clr}
        E=E(H_0)\cup E(H_1)\cup \{(u,v)~|~u\in V(H_0),~v\in V(H_1)\}.
     \end{array}\]
Let $O_{m}$ be the graph with $m$ vertices and no edges, then the join of $O_1$ and $K_{n}$, or $O_1\vee K_{n}$, is the complete graph $K_{n+1}$ and  the graph $O_{m}\vee K_{n}$ is exactly the complete multipartite graph $K_{1(n),m}$. Observe that the join of $O_{m}$ and 
$O_{n}$ is the complete bipartite graph $K_{m,n}$.  The join of $O_{m}$ and $K_{n_1,n_2}$ forms the complete tripartite graph $K_{n_1,n_2,m}$. Since joining $O_{m}$ with a complete $(k-1)$-partite graph generates a  complete $k$-partite graph, we are concerned about how the value of the IC-index of a graph changes as we joining $O_{m}$ to that graph.

%A split graph is a graph whose vertex set can be partitioned into two parts, one of which induces a clique and the other forms
%an independent set in the graph.

%%Proposition 3.1
 \noindent
\begin{prop}
 If $g$ is an IC-coloring of a connected graph $G$, then there exists an IC-coloring $f$ of $O_{m}\vee G$ such that 
$f(O_{m}\vee G)= 2^{m}g(G)+1$ for $m\geq 1$.
\end{prop}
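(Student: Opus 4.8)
The plan is to construct $f$ explicitly and then verify the IC-property. Write $S=g(G)$ and let $w_1,\dots,w_m$ be the vertices of $O_m$, so each $w_i$ is adjacent to every vertex of $G$ but to no $w_j$. I would keep $f=g$ on $V(G)$ and set
\[
 f(w_i)=2^{\,i-1}S\quad(1\le i\le m-1),\qquad f(w_m)=2^{\,m-1}S+1 .
\]
Using $\sum_{i=1}^{m-1}2^{\,i-1}=2^{\,m-1}-1$, one gets $f(O_m\vee G)=S+\sum_{i=1}^{m}f(w_i)=2^mS+1$, so only the IC-property has to be established.

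The fact that drives the argument is that each $w_i$ is adjacent to all of $V(G)$. Hence for a nonempty $W\subseteq\{w_1,\dots,w_m\}$ and $H\subseteq V(G)$, the subgraph induced by $W\cup H$ is connected whenever $H\neq\varnothing$ (a fixed vertex of $H$ is adjacent to every $w_i$ and a fixed $w_i$ to every vertex of $H$), and also when $|W|=1,\,H=\varnothing$; the only disconnected case is $|W|\ge2$ with $H=\varnothing$, where the $w_i$ form an independent set. Since $g$ is an IC-coloring, for every $k\in[1,S]$ there is a connected induced subgraph $H_k$ of $G$ with $g(H_k)=k$. Writing $\sigma_W=\sum_{w_i\in W}f(w_i)$, this lets me realize every integer of $[\sigma_W,\sigma_W+S]$ when $|W|=1$ (take $H=\varnothing$ for $\sigma_W$ and $H=H_k$ otherwise) and every integer of $[\sigma_W+1,\sigma_W+S]$ when $|W|\ge2$ (now $\sigma_W$ itself is lost, since it would require $H=\varnothing$). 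The subgraphs contained in $G$ realize $[1,S]$.

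It remains to show that these intervals cover $[1,2^mS+1]$ without a gap. Index each $W$ by $n=\sum_{w_i\in W}2^{\,i-1}\in\{0,\dots,2^m-1\}$; the chosen weights give $\sigma_W=nS+\varepsilon_n$, where $\varepsilon_n=1$ if $n\ge2^{m-1}$ (i.e. $w_m\in W$) and $\varepsilon_n=0$ otherwise. A direct check shows that the interval attached to index $n$ has right endpoint $(n+1)S+\varepsilon_n$ for every $n$, left endpoint $nS+\varepsilon_n$ when $n$ is a power of two (a singleton), and left endpoint $nS+\varepsilon_n+1$ otherwise (including $n=0$, whose block is $[1,S]$). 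Comparing index $n$ with index $n-1$, whose interval ends at $nS+\varepsilon_{n-1}$, the threshold $\varepsilon$ jumps from $0$ to $1$ only at $n=2^{m-1}$, which is a singleton. Consequently each non-singleton interval abuts its predecessor exactly; each singleton $n=2^{j}$ with $0\le j\le m-2$ overlaps its predecessor in the single point $nS$; and the top singleton $n=2^{m-1}$, carrying the extra $+1$, abuts rather than overlaps. The union is therefore the contiguous block ending at $2^mS+1$.

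The main obstacle, and the reason the weights cannot all be $2^{\,i-1}S$, is exactly the loss of the left endpoint $\sigma_W$ from every multi-vertex interval, caused by the disconnectedness of $W\cup\varnothing$ for $|W|\ge2$. Without correction this leaves a hole at each $\sigma_W$ with $n(W)$ not a power of two. The singleton intervals keep their left endpoint (one universal vertex needs no help from $G$) and are positioned to overlap their predecessors by exactly one, filling those holes; the single $+1$ on $w_m$ is the bookkeeping that makes the final block stop at $2^mS+1$ rather than overshoot. I expect the endpoint formula together with the case analysis on the jump of $\varepsilon_n$ to be the only delicate part; the rest follows at once from the universality of the $w_i$ and the IC-property of $g$.
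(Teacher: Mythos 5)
Your proposal is correct, and your coloring is exactly the paper's: $f=g$ on $V(G)$, $f(w_i)=2^{i-1}g(G)$ for $i\in[1,m-1]$, and $f(w_m)=2^{m-1}g(G)+1$. The difference lies in how the IC-property is verified. The paper argues forward from each target value $k$: for $k\in[g(G)+1,\,2^{m-1}g(G)]$ it writes $k=qg(G)+r$ with $1\leq r\leq g(G)$, expands $q$ in binary over $w_1,\dots,w_{m-1}$ (via Lemma 2.1), and attaches a connected subgraph of $G$ of weight $r$; the value $2^{m-1}g(G)+1$ is the singleton $w_m$; and the top half $[2^{m-1}g(G)+2,\,2^mg(G)+1]$ is handled by adding $w_m$ to a subgraph realizing $k-(2^{m-1}g(G)+1)$, which meets $V(G)$, so connectivity survives. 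You instead argue by coverage: indexing each $W\subseteq V(O_m)$ by $n=\sum_{w_i\in W}2^{i-1}$, you record that $W$ realizes the interval $[\sigma_W+1,\sigma_W+S]$, keeping the left endpoint $\sigma_W$ exactly when $|W|=1$ (since $W$ alone is disconnected once $|W|\geq 2$), and you check that these intervals tile $[1,2^mS+1]$, the key point being that the jump of $\varepsilon_n$ at $n=2^{m-1}$ lands on a singleton. Both proofs run on the same binary mechanism, so this is a reorganization rather than a new idea, but it buys something: the paper's per-$k$ construction is shorter and fully explicit, while your tiling makes visible the structural reason for the construction --- the singleton intervals must overlap their predecessors in the single point $nS$ to patch the holes left by the lost endpoints of multi-vertex $W$, and the $+1$ must sit on $w_m$ (the unique singleton at the $\varepsilon$ jump) so the final block ends at $2^mS+1$ without a gap. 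I checked your endpoint bookkeeping, including the edge cases $n=0$, $n=2^{m-1}$, and $m=1$, and it is sound.
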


%%%proof
\begin{proof}
Let $V_0=V(O_m)=\{w_1,w_2,\cdots,w_m \}$ and $V_1=V(G)=\{v_1,v_2,\cdots,v_n \}$. 
We define $f$ on $V(O_{m}\vee G)$ as $f(v_{i})=g(v_{i}) $ for $i\in[1,n]$, $f(w_{j})=2^{j-1}g(G)$ for
$j \in [1,m-1]$ and $f(w_{m})=2^{m-1}g(G)+1$. The value of
$f(O_{m}\vee G)$ can be calculated as follows.

\[\begin{array}{clr}
f(G)&=g(G)+\sum_{j=1}^{m} 2^{j-1}g(G)+1\\
        %&=(2^{n}-1)+(2^{0}+2^{1}+\cdots+2^{m-1})(2^{n}-1)+1\\
        &=g(G)+(2^{m}-1)g(G)+1\\
        &= 2^{m}g(G)+1.
\end{array}\]\\
Given any integer $k\in [1,2^{m}g(G)+1]$, we need to identify an
induced connected subgraph $H$ of  $O_{m}\vee G$ such that $f(H)=k$.
Since $g$ is an IC-coloring of $G$ and $f(v)=g(v)$ for each vertex in $G$, 
the desired induced connected subgraph exists for each $k\in [1,g(G)]$.  
For $k\in [g(G)+1,2^{m-1}g(G)]$, we rewrite $k$ into the form
$k=qg(G)+r$ where $1\leq r \leq g(G)$.  Hence, $1\leq q \leq 2^{m-1}-1$ and then there exists a binary sequence
$c_{1}, c_{2},\cdots, c_{m-1}$, which are not all zero, such that
$q=\sum_{i=1}^{m-1}c_{i}2^{i-1}$. It follows that
$qg(G)=\sum_{i=1}^{m-1}c_{i}2^{i-1}g(G)=\sum_{i=1}^{m-1}c_{i}f(w_{i})$.
 Now, let $H'$ be a connected subgraph of $G$ such that $f(H')=r$ and 
let $W=V(H')\cup\{w_{i} \mid c_{i}=1$, $i\in [1,m-1]\}$. Since $H'$ is connected, the subgraph
$H$ induced by $W$ is also connected and satisfies $f(H)=k$. Next, if $k=2^{m-1}g(G)+1$, 
then the subgraph $H$ induced by the single vertex $w_m$ fits our need.
Finally, for $k\in [2^{m-1}g(G)+2,2^{m}g(G)+1]$, we write $k$ as
 $k=(2^{m-1}g(G)+1)+k'$, where $1\leq k' \leq 2^{m-1}g(G)$. By the
above argument, there is a connected subgraph $H'$ such that
$f(H')=k'$ and $V(H')\cap V_1\neq\emptyset$. Let $W=V(H')\cup
\{w_m\}$, then the subgraph $H$ induced by $W$ is
connected and we have $f(H)=k$. The result follows.
\end{proof}

Using the known result $M(K_n)=2^n-1$ for $n\geq 1$ \cite{Pen}, lower bounds on the IC-indices of $K_{1(n),m}$  and $K_{1(n),m_1,m_2, \cdots,m_l}$ can be easily obtained by Proposition 3.1.

%%Corollary 3.2
 \noindent
\begin{cor}
  $M(K_{1(n),m})= M(O_{m}\vee K_{n})\geq 2^{m+n}-2^{m}+1$ for $m\geq 1 ~\mbox{and} ~n\geq 1.$
  
\end{cor}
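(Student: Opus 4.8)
The plan is to combine Proposition 3.1 with the known value of the IC-index of the complete graph. Recall that $O_m \vee K_n = K_{1(n),m}$, so it suffices to produce a single IC-coloring of this graph whose total equals $2^{m+n}-2^m+1$. I would begin with the graph $G=K_n$, which is connected for $n\geq 1$, and invoke the result $M(K_n)=2^n-1$ from \cite{Pen}. This guarantees the existence of a maximal IC-coloring $g$ of $K_n$ with $g(K_n)=2^n-1$.

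Next I would apply Proposition 3.1 to this coloring $g$. The proposition asserts that whenever $g$ is an IC-coloring of a connected graph $G$, there is an IC-coloring $f$ of $O_m\vee G$ with $f(O_m\vee G)=2^m g(G)+1$ for $m\geq 1$. Taking $G=K_n$ and $g(K_n)=2^n-1$, I would simply substitute to obtain
\[
f(O_m\vee K_n)=2^m(2^n-1)+1=2^{m+n}-2^m+1.
\]
Since $f$ is a genuine IC-coloring, the value $f(O_m\vee K_n)$ is at most the maximum over all IC-colorings, which is precisely $M(O_m\vee K_n)=M(K_{1(n),m})$. This yields the claimed inequality $M(K_{1(n),m})\geq 2^{m+n}-2^m+1$.

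The argument is essentially a one-line deduction, so I do not anticipate a genuine obstacle; the only points requiring care are the two structural facts being quoted rather than reproved. First I would make sure the identification $O_m\vee K_n = K_{1(n),m}$ is explicit (this was already established in the text preceding the corollary, where it is noted that $O_m\vee K_n$ has $n$ singleton partite sets coming from the vertices of $K_n$ together with one partite set of size $m$ from the independent vertices of $O_m$). Second I would confirm that $K_n$ is connected so that Proposition 3.1 applies; this holds for all $n\geq 1$, matching the hypothesis of the corollary. With both facts in place, the substitution completes the proof, and the corollary follows directly.
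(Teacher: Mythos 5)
Your proof is correct and follows exactly the route the paper intends: the corollary is derived by applying Proposition 3.1 to a maximal IC-coloring $g$ of $K_n$ with $g(K_n)=2^n-1$ from \cite{Pen}, yielding $f(O_m\vee K_n)=2^m(2^n-1)+1=2^{m+n}-2^m+1$ and hence the lower bound. Your added checks (that $K_n$ is connected and that $O_m\vee K_n=K_{1(n),m}$) are exactly the hypotheses the paper relies on implicitly.
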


For $m=1$, this lower bound matches the known value of the IC-index of $K_{n+1}$. We will show that this inequality  is in fact an equality when $m\geq 2 ~\mbox{and}~n\geq 2$.  
 
%%Corollary 3.3
 \noindent
\begin{cor}
For $m_1\geq m_2\geq\cdots \geq m_{\ell}$, 
%$$M(K_{1(n),m_1,m_2, \cdots,m_l})=M(O_{m_1}\vee O_{m_2}\vee \cdots \vee O_{m_\ell}\vee K_{n})\geq(2^{m_1}(2^{m_2}(\cdots %(2^{m_{\ell}}(2^n-1)+1)\cdots)+1)+1).$$
\[\begin{array}{clr}
M(K_{1(n),m_{\ell},m_{\ell-1}, \cdots,m_2,m_1})&=M(O_{m_1}\vee O_{m_2}\vee \cdots \vee O_{m_\ell}\vee K_{n})\\
        &\geq(2^{m_1}(2^{m_2}(\cdots (2^{m_{\ell}}(2^n-1)+1)\cdots)+1)+1).
\end{array}\]
\end{cor}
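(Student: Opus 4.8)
The plan is to prove the stated equality first and then obtain the inequality by a straightforward induction on $\ell$ built entirely on Proposition 3.1. For the equality, I would observe that the join operation is associative and commutative up to isomorphism, so that $O_{m_1}\vee O_{m_2}\vee\cdots\vee O_{m_\ell}\vee K_n$ is precisely the complete multipartite graph whose partite sets have sizes $m_1,m_2,\ldots,m_\ell$ together with the $n$ singleton parts coming from $K_n=K_{1(n)}$. Since the isomorphism type of a complete multipartite graph does not depend on the order in which its parts are listed, this graph is exactly $K_{1(n),m_\ell,m_{\ell-1},\cdots,m_1}$, which gives $M(K_{1(n),m_\ell,\ldots,m_1})=M(O_{m_1}\vee\cdots\vee O_{m_\ell}\vee K_n)$. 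The hypothesis $m_1\geq m_2\geq\cdots\geq m_\ell$ plays no role in correctness here; it merely fixes a canonical nesting order for the right-hand side.

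For the inequality, the idea is to construct an explicit IC-coloring of $G_\ell := O_{m_1}\vee\cdots\vee O_{m_\ell}\vee K_n$ whose total value equals the nested expression, so that the lower bound follows from $M(G_\ell)\ge g_\ell(G_\ell)$. I would set $G_0=K_n$ and, for $j\in[1,\ell]$, $G_j=O_{m_{\ell-j+1}}\vee G_{j-1}$, so that $G_\ell$ is the target graph. Define the sequence $a_0=2^n-1$ and $a_j=2^{m_{\ell-j+1}}a_{j-1}+1$; unwinding the recursion shows that $a_\ell$ is exactly the nested expression on the right-hand side. The base case uses Penrice's result $M(K_n)=2^n-1$, which furnishes an IC-coloring $g_0$ of $G_0=K_n$ with $g_0(G_0)=a_0$.

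The inductive step is then a single invocation of Proposition 3.1. Assuming $g_{j-1}$ is an IC-coloring of $G_{j-1}$ with $g_{j-1}(G_{j-1})=a_{j-1}$, Proposition 3.1 (applied with the roles $G\mapsto G_{j-1}$ and $m\mapsto m_{\ell-j+1}$) produces an IC-coloring $g_j$ of $G_j=O_{m_{\ell-j+1}}\vee G_{j-1}$ with $g_j(G_j)=2^{m_{\ell-j+1}}g_{j-1}(G_{j-1})+1=2^{m_{\ell-j+1}}a_{j-1}+1=a_j$. After $\ell$ steps this yields an IC-coloring $g_\ell$ of $G_\ell$ with $g_\ell(G_\ell)=a_\ell$, and hence $M(G_\ell)\ge a_\ell$, as required.

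The only hypothesis of Proposition 3.1 that must be verified at each stage is that $G_{j-1}$ is connected, and this is where I would be most careful, though the verification is short: $G_0=K_n$ is connected, and if $G_{j-1}$ is connected then every vertex of $O_{m_{\ell-j+1}}$ is adjacent to every vertex of $G_{j-1}$ in the join, so $G_j$ is connected as well. Thus connectivity propagates through the induction and Proposition 3.1 is legitimately applicable at every step. Beyond this bookkeeping there is no genuine obstacle: the corollary is a clean iteration of the single-join construction of Proposition 3.1, with the base case supplied by the known value $M(K_n)=2^n-1$.
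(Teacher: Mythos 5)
Your proposal is correct and is essentially the paper's own argument: the authors state that the corollary follows from $M(K_n)=2^n-1$ by successively applying Proposition 3.1, which is precisely your induction on $\ell$, and your explicit verification of the isomorphism $O_{m_1}\vee\cdots\vee O_{m_\ell}\vee K_n\cong K_{1(n),m_\ell,\ldots,m_1}$ and of connectivity at each stage just fills in the routine details the paper leaves implicit.
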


Furthermore, since the IC-index of a bipartite graph is known, a lower bound on $M(K_{m_1,m_2,m_3})$ can be derived as well. By successively applying Proposition 3.1, a lower bound on the IC-index of any complete multipartite graph can be easily found.

\subsection{ The Exact Value of the IC-index of $K_{1(n),m}$}

Now, we consider the graph $K_{1(n),m}$, or $O_{m}\vee K_{n}$. We shall show that the IC-index of this graph is $2^{m+n}-2^{m}+1$.
In the remainder of the this paper, we let $G=K_{1(n),m}=O_{m}\vee K_{n}$, $V_0=V(O_m)$ and $V_1=V(K_n)$. 
We introduce some properties of the graph $K_{1(n),m}$ and any maximal IC-coloring of it first.

%%%Lemma 3.4
\noindent
\begin{lem}
$K_{1(n),m}$ has $~(2^{m+n}-2^{m}+m)$ induced connected subgraphs.
\end{lem}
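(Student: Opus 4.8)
The plan is to count the induced connected subgraphs directly, by describing each one as a vertex subset and using the join structure to decide exactly when that subset induces a connected graph. Recall $G=O_m\vee K_n$ with $V_0=V(O_m)$ an independent set of size $m$ and $V_1=V(K_n)$ a clique of size $n$, every vertex of $V_0$ being adjacent to every vertex of $V_1$. A (nonempty) induced subgraph is determined by a vertex set $S\subseteq V_0\cup V_1$; I would write $A=S\cap V_0$ and $B=S\cap V_1$, so that $G[S]$ consists of the independent set $A$, the clique $B$, and all edges between $A$ and $B$.

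The key observation is a dichotomy governed by $B$. If $B\neq\emptyset$, then $G[B]$ is connected (a nonempty complete graph), and since every vertex of $A$ is joined to every vertex of $B$, each vertex of $A$ is adjacent to $B$; hence $G[S]$ is connected, irrespective of $A$. Conversely, if $B=\emptyset$, then $S=A$ lies entirely in the independent set $V_0$, so $G[S]$ has no edges at all and is connected precisely when $|A|=1$.

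From this dichotomy the count is immediate. In the case $B\neq\emptyset$, I would choose any nonempty $B\subseteq V_1$ (there are $2^n-1$ choices) together with an arbitrary $A\subseteq V_0$ (there are $2^m$ choices), giving $(2^n-1)2^m=2^{m+n}-2^m$ connected subgraphs. In the case $B=\emptyset$, the connected subgraphs are exactly the $m$ single vertices of $V_0$. Summing the two cases yields $2^{m+n}-2^m+m$, as claimed.

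There is no genuinely hard step here; the argument is a clean case analysis driven entirely by whether any clique vertex is selected. The only points that require care are bookkeeping ones: excluding the empty vertex set from the count, and not overlooking that when no clique vertex is chosen a subset of $V_0$ is connected only if it is a single vertex. It is precisely this last observation that contributes the additive term $m$ and distinguishes the formula from the naive product $(2^n-1)2^m$.
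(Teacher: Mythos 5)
Your proof is correct and follows essentially the same case analysis as the paper: the paper splits into three cases (subsets of $V_1$, singletons of $V_0$, and sets meeting both parts), counting $(2^n-1)+m+(2^m-1)(2^n-1)$, while you merely merge the first and third cases into the single condition $B\neq\emptyset$, giving $(2^n-1)2^m$ --- an algebraically identical count. No gaps; the dichotomy on whether a clique vertex is selected is exactly the observation driving the paper's proof.
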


%%%proof
\begin{proof}
Any induced connected subgraph $H$ of $G$ must
satisfy exactly one of the following three conditions: (i)$V(H)\subseteq V_1$ and $V(H)\neq\emptyset$;
(ii)$V(H)\subseteq V_0$ and $|V(H)|=1$; (iii)$V(H)\cap V_1\neq\emptyset$ and $V(H)\cap V_0\neq\emptyset$.
Therefore, the number of induced connected graphs of $G$ is $(2^{n}-1)+m+(2^{m}-1)(2^{n}-1) = 2^{m+n}-2^{m}+m.$
\end{proof}

%%%Proposition 3.3
\noindent
\begin{prop}
If $f$ is a maximal IC-coloring of $K_{1(n),m}$, then $f(u)
\neq f(v)$ for each pair of distinct vertices $u$ and $v$ in $V(K_{1(n),m})$.
\end{prop}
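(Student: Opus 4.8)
The plan is to argue by contradiction, using the counting inequality of Lemma 2.6 together with the exact enumeration of induced connected subgraphs from Lemma 3.4. Since $f$ is maximal, Corollary 3.2 gives $f(G)=M(G)\ge 2^{m+n}-2^{m}+1$, while Lemma 3.4 says that $G$ has exactly $\ell = 2^{m+n}-2^{m}+m$ induced connected subgraphs. Lemma 2.6 tells us that whenever we can display $2k$ distinct induced connected subgraphs forming $k$ pairs of equal $f$-value, we must have $f(G)\le \ell-k$. Combining these, any such collection obeys $k\le \ell-f(G)\le m-1$. Hence it suffices to show that a coincidence $f(u)=f(v)$ between two distinct vertices would force at least $m$ disjoint equal-value pairs, contradicting $k\le m-1$.

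The engine of the proof is a swap operation. First I would recall the structural fact (exactly the trichotomy used in the proof of Lemma 3.4) that a nonempty $W\subseteq V(G)$ induces a connected subgraph precisely when $W$ is a single vertex or $W\cap V_1\neq\emptyset$. Now assume $f(u)=f(v)$ with $u\neq v$. For $S\subseteq V_1\setminus\{u,v\}$ and $T\subseteq V_0\setminus\{u,v\}$, I pair the subgraph induced by $\{u\}\cup S\cup T$ with the one induced by $\{v\}\cup S\cup T$. These two sets have equal $f$-value, since $f(u)=f(v)$ and they share the common part $S\cup T$; they are genuinely distinct, and they stay distinct across different choices of $(S,T)$, because the first set always contains $u$ but not $v$ while the second contains $v$ but not $u$. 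Thus every admissible pair $(S,T)$ for which both induced subgraphs are connected produces one fresh equal-value pair, and all the subgraphs so produced are mutually distinct.

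The bookkeeping I expect to be the crux is verifying connectedness of both swapped subgraphs and counting admissible pairs $(S,T)$ according to where $u$ and $v$ lie. If $u,v\in V_1$, both $\{u\}\cup S\cup T$ and $\{v\}\cup S\cup T$ contain a $V_1$-vertex and are automatically connected, giving $2^{n-2}\cdot 2^{m}=2^{m+n-2}$ pairs. If exactly one of $u,v$ lies in $V_1$, or if both lie in $V_0$, connectedness can fail only for the configurations with $S=\emptyset$ and $T\neq\emptyset$, which I would subtract off; the admissible count then comes out to $2^{m+n-2}-2^{m-1}+1$ and $2^{m+n-2}-2^{m-2}+1$ respectively. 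Since $n\ge 2$ gives $2^{m+n-2}\ge 2^{m}$, in every case the number of pairs is at least $2^{m-1}+1$, and $2^{m-1}+1>m$ for all $m\ge 1$. Hence $k\ge m$.

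Feeding $k\ge m$ into Lemma 2.6 yields $f(G)\le \ell-m = 2^{m+n}-2^{m}$, contradicting $f(G)\ge 2^{m+n}-2^{m}+1$; therefore no two distinct vertices can share an $f$-value. The main obstacle, as indicated, is the connectedness case analysis for the swap, and in particular the degenerate configuration $u,v\in V_1$ with small $n$ (e.g.\ $n=2$, where $V_1\setminus\{u,v\}=\emptyset$): this is precisely why I attach arbitrary subsets $T\subseteq V_0$ to the swapped sets, so that the $V_0$-vertices, rather than the $V_1$-vertices alone, supply more than enough pairs to beat the bound $m-1$.
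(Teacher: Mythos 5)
Your proof is correct and takes essentially the same route as the paper's: the identical $u\leftrightarrow v$ swap pairing over subsets $S\subseteq V_1\setminus\{u,v\}$ and $T\subseteq V_0\setminus\{u,v\}$, the same connectedness case analysis producing the counts $2^{m+n-2}$, $2^{m+n-2}-2^{m-1}+1$ and $2^{m+n-2}-2^{m-2}+1$, and the same contradiction obtained by combining Lemma 2.6, Lemma 3.4 and Corollary 3.2. The only cosmetic difference is that you phrase the endgame as ``$k\le m-1$ is forced but $k\ge 2^{m-1}+1>m$,'' whereas the paper plugs the minimal count $p=2^{m+n-2}-2^{m-1}+1$ directly into $f(G)\le(2^{m+n}-2^{m}+m)-p$ and checks this is below $2^{m+n}-2^{m}+1$.
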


%%%proof
\begin{proof}
Suppose that there are two distinct vertices $u$ and $v$ in $V(G)$ such that $f(u) =f(v)$.
For subsets of vertices $V_0' \subseteq V_0 \setminus\{u,v\}$ and $V_1'\subseteq V_1 \setminus\{u,v\}$
, we denote as $H_{u}$ and $H_{v}$ the subgraphs of $G$ induced by $V_0'\cup V_1'\cup\{u\}$ and $V_0'\cup V_1'\cup\{v\}$ respectively. Then, $H_u\neq H_v$ and $f(H_{u})=f(H_{v})$.
Let $S$ be the set of all possible pairs $(V_0',V_1')$ such that $H_u$ and  $H_v$ are both connected, and let $p=|S|$.  One can see from Lemma 2.6 and Lemma 3.4 that $f(G)\leq(2^{m+n}-2^{m}+m)-p$.

 Observe that any connected subgraph $H$ of $G$ must satisfy exactly one of the two conditions: (i)$V(H)\cap V_1\neq\emptyset$, or (ii)$V(H)\cap V_1=\emptyset$ and $|V(H)\cap V_0|=1$. Base on this fact, we now evaluate the number $p$ in the following three possible cases. \\

\noindent
{Case (a)}: if $u,v\in V_1$, then $p=2^{|V_1\setminus\{u,v\}|}\cdot 2^{|V_0|}=2^{m+n-2}$. \\

\noindent
{Case (b)}: if $u,v\in V_0$, then either $V_1'\neq\emptyset$ or $V_0'=V_1'=\emptyset$. Thus we have $p=2^{|V_0\setminus\{u,v\}|}\cdot (2^{|V_1|}-1)+1=2^{m-2}(2^{n}-1)+1=2^{m+n-2}-2^{m-2}+1$. \\

\noindent
{Case (c)}: if exactly one of $u$ and $v$ is in $V_0$, then either $V_1'\neq\emptyset$ or $V_0'=V_1'=\emptyset$. It follows that $p=2^{m-1}(2^{n-1}-1)+1=2^{m+n-2}-2^{m-1}+1$.\\

%\noindent
The value of $p$ in Case (c) is the minimum among these three cases. This leads to an upper bound on $f(G)$ as follows.
%\[\begin{array}{clr}
% p&=\min\{2^{m+n-2},2^{m+n-2}-2^{m-2}+1, 2^{m+n-2}-2^{m-1}+1\}\\
%  &=2^{m+n-2}-2^{m-1}+1
%  \end{array}\]\\
%\noindent
%  and we have
\[\begin{array}{clr}
f(G)&\leq(2^{m+n}-2^{m}+m)-p\\
%    &\leq(2^{m+n}-2^{m}+m)-\min\{2^{m+n-2},2^{m+n-2}-2^{m-2}+1, 2^{m+n-2}-2^{m-1}+1\}\\
    &\leq(2^{m+n}-2^{m}+m)-(2^{m+n-2}-2^{m-1}+1)\\
   % &=2^{m+n-2}(2^{2}-1)-2^{m-1}(2-1)+(m-1)\\
   % &=3\times 2^{m+n-2}-2^{m-1}+m-1\\
    &=2^{m+n}-2^{m+n-2}-2^{m-1}+m-1.
\end{array}\]\\
Since $m,n\geq 2$, we have $2^{m+n-2}\geq 2^m$ and $-2^{m-1}+m-1<0$. This implies that $f(G)<2^{m+n}-2^{m}+1$ which is a contradiction to Corollary 3.2 and we have the result.
\end{proof}

Now we are in a position to prove that the lower bound given in Corollary 3.2 also serves as an upper bound on $M(K_{1(n),m})$.
However, the proof is too involved that we need some more notations to facilitate the whole discussion process. In what
follows, for the given maximal IC-coloring $f$ of $G$, we always assume that $\{u_1,u_2,\cdots,u_{m+n} \}$ is the vertex set of $G$ such that $f(u_i)<f(u_{i+1})$ for all $i \in[1,m+n-1]$. Thus, $f(u_1)=1$ and  $f(u_2)=2$ are always true. Besides, we also define $f_0=0$ and denote the sum $\sum_{j=1}^{i}f(u_{j})$ as $f_{i}$ for $i\in[1,m+n]$. Let us introduce some useful facts first.

%%%Lemma 3.6
\noindent
\begin{lem}
If $f$ is a maximal IC-coloring of $K_{1(n),m}$, then \\
(1) $f_j\leq  2^{j-i}(f_i+1)-1$ for every pair $(i,j)$ such that $1\leq i < j\leq m+n$.\\
(2) $f_i\geq 3\cdot 2^{i-2}$ for each $i\in [2,m+n]$.
\end{lem}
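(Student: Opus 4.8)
The plan is to treat the two parts in order, deriving everything from the one-step growth bound that Lemma 2.3 supplies. Since the vertices are ordered so that $f(u_1)<f(u_2)<\cdots<f(u_{m+n})$, Lemma 2.3 gives $f(u_{i+1})\le f_i+1$ for each $i$, and hence the recurrence $f_{i+1}=f_i+f(u_{i+1})\le 2f_i+1$. For part (1) I would fix $i$ and induct on $j$. The base case $j=i+1$ is exactly $f_{i+1}\le 2f_i+1=2(f_i+1)-1$. For the inductive step, assuming $f_j\le 2^{j-i}(f_i+1)-1$, one applies the recurrence once more: $f_{j+1}\le 2f_j+1\le 2\bigl(2^{j-i}(f_i+1)-1\bigr)+1=2^{\,j+1-i}(f_i+1)-1$. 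This closes the induction and yields part (1) for every pair $1\le i<j\le m+n$.

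For part (2) the idea is to feed the global value of $f$ into part (1). Taking $j=m+n$ in part (1) gives $f_{m+n}\le 2^{\,m+n-i}(f_i+1)-1$. Because $f$ is a maximal IC-coloring, $f_{m+n}=f(G)=M(G)$, and Corollary 3.2 supplies $M(G)\ge 2^{m+n}-2^{m}+1$. Substituting this lower bound into the previous inequality and solving for $f_i$ produces a lower bound of the shape $f_i\ge 2^{i}-2^{\,i-n}+2^{\,i-m-n+1}-1$. I would then rewrite $2^{i}-2^{\,i-n}$ as $3\cdot 2^{\,i-2}+2^{\,i-2}\bigl(1-2^{\,2-n}\bigr)$ and observe that, since $n\ge 2$, the correction term $2^{\,i-2}\bigl(1-2^{\,2-n}\bigr)$ is nonnegative while $2^{\,i-m-n+1}$ is strictly positive. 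Hence the lower bound on $f_i$ strictly exceeds $3\cdot 2^{\,i-2}-1$; as both $f_i$ and $3\cdot 2^{\,i-2}$ are integers for $i\ge 2$, this forces $f_i\ge 3\cdot 2^{\,i-2}$, which is the claim.

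The routine parts are the induction in (1) and the algebraic rearrangement in (2); the single point that needs genuine care is the final step, where the real-valued lower bound on $f_i$ can itself fall short of $3\cdot 2^{\,i-2}$ (for instance when $i\le m$ with $n=2$), so that only the integrality of $f_i$ closes the gap. I expect this integrality argument, together with verifying that the hypotheses $m\ge 2$ and $n\ge 2$ are exactly what make the two correction terms carry the right signs, to be the main obstacle. It is worth noting that no case analysis on the partition structure of $G$ is required here: the whole lemma rests only on the ordering of the colors, Lemma 2.3, and the lower bound from Corollary 3.2.
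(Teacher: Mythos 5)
Your proof is correct and essentially the same as the paper's: part (1) is the paper's combination of Lemmas 2.2 and 2.3 with the telescoping simply unrolled as an explicit induction on $j$, and part (2) is the contrapositive of the paper's argument, which assumes $f_i\le 3\cdot 2^{i-2}-1$ and derives $f(G)\le 2^{m+n}-2^{m+n-2}-1<2^{m+n}-2^{m}+1$, contradicting Corollary 3.2. The one substantive wrinkle in your direct version --- needing the integrality of $f_i$ to round the fractional lower bound up to $3\cdot 2^{i-2}$ --- is exactly what the paper's contradiction framing sidesteps, since the integer hypothesis $f_i\le 3\cdot 2^{i-2}-1$ gives $f_i+1\le 3\cdot 2^{i-2}$ cleanly; you identified and handled this point correctly.
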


%%%proof
\begin{proof}
(1) Given $i \in [1,m+n-1]$, we let $s_0=f_i$ and $s_k=f(u_{i+k})$ for each $k\in [1,m+n-i]$.
By Lemma 2.3, we have $s_k\leq f_{i+k-1}+1=\sum_{\ell=0}^{k-1}s_{\ell}+1$.
For each $j \in [i+1,m+n]$, one can see from Lemma 2.2 that
\[\begin{array}{cll}
f_j%&=&f_i+f(u_{i+1})+\cdots+f(u_j)\\
   &=&\sum_{\ell=0}^{j-i}s_\ell\\
   &\leq& 2^{j-i}f_i+\sum_{\ell=1}^{j-i}2^{j-i-\ell}\cdot 1\\
   &=& 2^{j-i}(f_i+1)-1.
 \end{array}\]
(2) Suppose that there exists some $i\in [2,m+n]$ such that $f_i\leq 3\cdot 2^{i-2}-1$. According to part(1), we have\\
\[\begin{array}{cll}
f(G)&=&f_{m+n}\\
   &\leq&2^{m+n-i}(f_i+1)-1\\
   &\leq&2^{m+n-i}(3\cdot 2^{i-2})-1\\
   %&=&3\times 2^{m+n-2}-1\\
   &=&2^{m+n}-2^{m+n-2}-1\\
   &<&2^{m+n}-2^{m}+1.
 \end{array}\]

\noindent
This contradicts to Corollary 3.2 and we have the result.
\end{proof}

%%%Lemma 3.7
\noindent
\begin{lem}
Suppose that $f$ is a maximal IC-coloring of $K_{1(n),m}$. Let $s_{0}=f_{0}=0$ and $s_i=f(u_i)$ for $i\in [1,m+n]$.
If each $r_i $ is the integer such that $s_i=\sum_{j=0}^{i-1}s_{j}+r_i$, $i\in [1,m+n]$, then
$r_i\leq 1$. Furthermore, if $r_i\leq 0$ for all $i\in \{j \mid u_j \in V_0\setminus\{ u_{k_0}\}\}$, where 
$k_0=\max \left.\{j\mid u_{j} \in V_0 \right.\}$,  then $f(K_{1(n),m})\leq 2^{m+n}-2^{m}+1$.
\end{lem}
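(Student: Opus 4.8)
The plan is to handle the two assertions in turn, the first being essentially a restatement of Lemma 2.3. Unwinding the definition of $r_i$ gives $r_i=s_i-\sum_{j=0}^{i-1}s_j=f(u_i)-f_{i-1}$. For $i=1$ this reads $r_1=f(u_1)-f_0=1$, and for $i\geq 2$ Lemma 2.3 (applied to the value-ordered vertices of the IC-coloring $f$) gives $f(u_i)\leq \sum_{j=1}^{i-1}f(u_j)+1=f_{i-1}+1$, i.e.\ $r_i\leq 1$. Hence $r_i\leq 1$ for every $i\in[1,m+n]$.

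For the second assertion the engine is Lemma 2.2. Applying it to the sequence $s_0,s_1,\dots,s_{m+n}$ and using $s_0=f_0=0$ produces the clean identity
\[
f(G)=f_{m+n}=\sum_{j=0}^{m+n}s_j=\sum_{j=1}^{m+n}2^{m+n-j}r_j .
\]
I would then split this sum according to whether $j$ lies in $A:=\{\,j\mid u_j\in V_0\setminus\{u_{k_0}\}\,\}$, a set of exactly $m-1$ indices, all strictly smaller than $k_0$. By hypothesis $r_j\leq 0$ for $j\in A$, so those terms are nonpositive, while every remaining term satisfies $r_j\leq 1$ by the first part. Therefore
\[
f(G)\leq\sum_{j\notin A}2^{m+n-j}=(2^{m+n}-1)-\sum_{j\in A}2^{m+n-j}.
\]

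The heart of the matter, and the step I expect to be the main obstacle, is the lower bound $\sum_{j\in A}2^{m+n-j}\geq 2^m-2$; once this is in hand, substituting it into the previous display immediately yields $f(G)\leq(2^{m+n}-1)-(2^m-2)=2^{m+n}-2^m+1$, as required. To prove it I would use only two structural facts: $A$ consists of $m-1$ indices drawn from $\{1,\dots,k_0-1\}$, and $k_0\leq m+n$. Since $\ell\mapsto 2^{m+n-\ell}$ is decreasing, the sum over any $(m-1)$-element subset of $\{1,\dots,k_0-1\}$ is minimized by taking the indices as large as possible, namely $\{k_0-m+1,\dots,k_0-1\}$; evaluating the resulting geometric series gives
\[
\sum_{j\in A}2^{m+n-j}\ \geq\ \sum_{j=k_0-m+1}^{k_0-1}2^{m+n-j}=2^{\,m+n-k_0}\bigl(2^{m}-2\bigr)\ \geq\ 2^{m}-2,
\]
where the final inequality uses $2^{\,m+n-k_0}\geq 1$. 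This completes the argument. It is worth flagging in the write-up that the whole graph-theoretic content is compressed into the hypothesis placing the $m-1$ small $V_0$-vertices before $u_{k_0}$ in the value order; everything after the identity from Lemma 2.2 is a rearrangement/geometric-sum estimate, and the extremal configuration $A=\{n+1,\dots,n+m-1\}$, $k_0=m+n$ is precisely the one meeting the target value $2^{m+n}-2^m+1$.
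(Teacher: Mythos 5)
Your proof is correct, but it takes a genuinely different route from the paper's. Both arguments start from the Lemma~2.2 identity $f(G)=\sum_{j=1}^{m+n}2^{m+n-j}r_j$ (with $s_0=0$) and the bound $r_i\leq 1$ from Lemma~2.3, but they diverge from there. The paper writes out the explicit coloring $\bar f$ of $O_m\vee K_n$ from Proposition~3.1, computes its increment sequence ($\bar r_i=1$ for $i\in[1,n]\cup\{m+n\}$ and $\bar r_i=0$ for $i\in[n+1,m+n-1]$), and then invokes the exchange inequality of Lemma~2.5 to compare the distribution of $1$'s in the $r_i$'s against the $\bar r_i$'s, splitting into the cases $r_{m+n}=1$ and $r_{m+n}\leq 0$, to conclude $f(G)\leq\bar f(G)=2^{m+n}-2^m+1$. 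You instead discard the nonpositive terms indexed by $A=\{j\mid u_j\in V_0\setminus\{u_{k_0}\}\}$ outright, bound every remaining $r_j$ by $1$, and reduce the problem to the purely combinatorial estimate $\sum_{j\in A}2^{m+n-j}\geq 2^{m+n-k_0}(2^m-2)\geq 2^m-2$, which you justify by the monotonicity of $j\mapsto 2^{m+n-j}$: the sum over an $(m-1)$-subset of $\{1,\dots,k_0-1\}$ is minimized at $\{k_0-m+1,\dots,k_0-1\}$. (This minimization is legitimate, since $|A|=m-1$ forces $k_0\geq m$, and the index $k_0$ itself, where $r_{k_0}=1$ is allowed, is simply absorbed into the complement of $A$.) Your version buys something real: it avoids Lemma~2.5 entirely, needs no case split on $r_{m+n}$, and replaces the paper's somewhat informal ``comparing the distributions of the $1$'s'' step with an explicit geometric-series computation whose slack factor $2^{m+n-k_0}$ is visible. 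What the paper's route buys in exchange is conceptual: it exhibits the lower-bound coloring $\bar f$ as the extremal increment pattern, which you recover only implicitly in your closing remark that $A=\{n+1,\dots,n+m-1\}$, $k_0=m+n$ attains the bound.
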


%%%proof
\begin{proof}
The first result is trivial from Lemma 2.3. To prove that second result, 
we describe the IC-coloring $\bar{f}$ defined in the proof of Proposition 3.1 for $O_{m}\vee K_{n}$ explicitly. 
Let $V_0=V(O_m)=\{w_1,w_2,\cdots,w_m \}$ and $V_1=V(K_n)=\{v_1,v_2,\cdots,v_n \}$. 
Since we are given a maximal IC-coloring $g$ of $K_n$ defined as $g(v_i)=2^{i-1}$ for each $v_i\in V_1$ [7], we have an IC-coloring $\bar{f}$ of $O_{m}\vee K_{n}$ defined as 
$\bar{f}(v_{i})=g(v_i)=2^{i-1} $ for $i\in[1,n]$, $\bar{f}(w_{j})=2^{j-1}(2^{n}-1)$ for $j \in [1,m-1]$ and
$\bar{f}(w_{m})=2^{m-1}(2^{n}-1)+1$. Then $\bar{f}(O_{m}\vee K_{n})=2^{m+n}-2^{m}+1$. 
Let us rearrange the vertices of $G$ into a new order $\{ \bar{u}_1,\bar{u}_2,\cdots,\bar{u}_{m+n}\}$  such that $\bar{f}(\bar{u}_i)<\bar{f}(\bar{u}_{i+1})$ for all $i\in [1,m+n-1]$. Then $\bar{f}(\bar{u}_i)=\sum_{j=1}^{i-1}\bar{f}(\bar{u}_j)+\bar{r}_i$, where $\bar{r}_i=1$ for $i\in[1,n] \cup \{n+m\}$, and  $\bar{r}_i=0$ for $i\in[n+1,n+m-1]$. According to the assumption in this Lemma, $r_i\leq 0$ for all $i\in \{j~|~ u_j \in V_0\setminus\{ u_{k_0}\}\}$. %there are at most $(n+1)$ 1's in the values of .
If $r_{m+n}=1$, then among $r_1,r_2, \cdots,r_{m+n}$ there are at most $(n+1)$ of them taking the value one and at least $(m-1)$ of them having the values no more than zero. By comparing the distributions of the $1$'s in $\bar{r}_i$'s and ${r}_i$'s, we conclude from Lemma 2.5 that $f(G)$ does not exceed the value $\bar{f}(G)=2^{m+n}-2^{m}+1$. If $r_{m+n}=0$, then there are at most $n$ $r_i$'s being one and at least $m$ $r_i$'s not exceeding zero. Lemma 2.5 again guarantees the truth of the inequality $f(G)\leq \bar{f}(G)$ in this case.
\end{proof}

Next, we consider the case where the assumption in Lemma 3.7 is violated, namely, there is a $r_i$ having the value one for some $i \in \{ j\mid u_j \in V_0\setminus \{u_{k_0}\}\}$.

%%%Lemma 3.8
\noindent
\begin{lem}
Let $f$ be a maximal IC-coloring of $K_{1(n),m}$. Let $s_{0}=f_{0}=0$, $s_i=f(u_i)$  and $r_i=s_i-\sum_{j=0}^{i-1}s_{j}$ for $i\in [1,m+n]$. Suppose that $S_1=\{ i < k_0 \mid r_i=1 \textnormal{ and } u_i \in V_0 \} \neq \emptyset$, where $k_0=\max \left.\{j\mid u_{j} \in V_0 \right.\}$. Let $i_1=  \min \left. S_1\right.$, $t=|\{ u_i \in V_0 \mid i<i_1\}|$ and $S_2=\{ i \geq i_1+1 \mid u_i \in V_0 \textnormal{ and } f(u_i)> f_{i-1}-f(u_{i_1})\}$.
\begin{itemize}
  \item[(i)] If $S_2=\emptyset$, then  $f(K_{1(n),m})\leq 2^{m+n}-2^{m}+1$.
  \item[(ii)] If $S_2 \neq \emptyset$, then  $f(K_{1(n),m})\leq 2^{m+n}-2^{m+n-2}(1-2^{-(i_2-i_1)})-(2^t-1)(3\cdot 2^{m+n-i_1-1}+2^{m+n-i_2-1})$, where $i_2=  \min \left.S_2 \right.$.
  \item[(iii)] If $S_2 \neq \emptyset$ and $n\geq 3$, then  $f(K_{1(n),m})\leq 2^{m+n}-2^{m}+1$.
\end{itemize}
\end{lem}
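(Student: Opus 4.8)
The plan is to treat the three parts with one common opening move and then increasingly involved estimates. Throughout I write $f(G)=f_{m+n}=\sum_{j=1}^{m+n}2^{m+n-j}r_j$ (this is Lemma 2.2 with $s_0=0$), and I recall $r_j\le 1$ for every $j$ by Lemma 3.7. Since $u_{i_1}\in V_0$ satisfies $r_{i_1}=1$ and any later vertex $u_i\in V_0$ is non-adjacent to $u_{i_1}$, Lemma 2.4 applies to every pair $(i_1,i)$ with $u_i\in V_0$ and $i>i_1$: either alternative (a) $f(u_i)\le f_{i-1}-f(u_{i_1})$ holds, or alternative (b) $f(u_{i+1})\le f(u_{i_1})+f(u_i)$ holds. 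By definition $S_2$ collects exactly those $i$ for which (a) fails. A preliminary observation I will record is that $m+n\notin S_2$: for the top vertex alternative (b) is vacuous, so (a) is forced; hence $i_2=\min S_2\le m+n-1$ whenever $S_2\neq\emptyset$, which keeps every exponent below nonnegative.

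For part (i), $S_2=\emptyset$ means (a) holds for every $V_0$-vertex after $i_1$, so $r_i\le -f(u_{i_1})<0$ for all of them. By the minimality of $i_1$ in $S_1$, every $V_0$-vertex before $i_1$ has $r_i\le 0$ as well. Thus $u_{i_1}$ is the \emph{only} vertex of $V_0$ with $r_i=1$, so among $r_1,\dots,r_{m+n}$ at most $n+1$ equal $1$ and at least $m-1$ are $\le 0$. This is exactly the census used in the proof of Lemma 3.7, so the identical Lemma 2.5 rearrangement argument (moving the ones to the front, as in the coloring $\bar f$) gives $f(G)\le\bar f(G)=2^{m+n}-2^{m}+1$.

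Part (ii) is the crux and the main obstacle. Here $i_2=\min S_2$, so (a) fails and (b) holds: $f(u_{i_2+1})\le f(u_{i_1})+f(u_{i_2})=(f_{i_1-1}+1)+f(u_{i_2})$. Writing $A=f_{i_1-1}$ (so $f_{i_1}=2A+1$), I will isolate the single defective coordinate $r_{i_2+1}$ in $f(G)=\sum_j 2^{m+n-j}r_j$: the head $\sum_{j\le i_2}2^{m+n-j}r_j$ equals $2^{m+n-i_2}f_{i_2}$, the tail $\sum_{j\ge i_2+2}2^{m+n-j}r_j$ is at most $2^{m+n-i_2-1}-1$, and by (b) the middle term obeys $2^{m+n-i_2-1}r_{i_2+1}\le 2^{m+n-i_2-1}(A+1-f_{i_2-1})$. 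Feeding in $f(u_{i_2})\le f_{i_2-1}+1$ (Lemma 2.3) to replace $f_{i_2}$ by $f_{i_2-1}$, and then bounding $f_{i_2-1}\le 2^{i_2-i_1}(A+1)-1$ by Lemma 3.6(1) applied to $(i_1,i_2-1)$, all the stray powers of two cancel and the estimate collapses cleanly to
\[
f(G)\le (A+1)\,P-1,\qquad P:=3\cdot 2^{m+n-i_1-1}+2^{m+n-i_2-1}.
\]
It remains to bound $A+1$. Since the $t$ vertices of $V_0$ lying before $i_1$ contribute nonpositive $r$-values while the other $i_1-1-t$ coordinates contribute at most $1$, putting the $t$ nonpositive coordinates in the least-weighted slots and the rest equal to $1$ maximizes $A=\sum_{j<i_1}2^{i_1-1-j}r_j$, giving $A\le 2^{i_1-1}-2^{t}$ and hence $A+1\le 2^{i_1-1}-(2^{t}-1)$. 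Substituting this, and using the identity $2^{i_1-1}P=3\cdot 2^{m+n-2}+2^{m+n+i_1-i_2-2}=2^{m+n}-2^{m+n-2}(1-2^{-(i_2-i_1)})$, produces exactly the asserted bound. The delicate point is the bookkeeping: the defect at $i_2+1$ must be carried explicitly and paired with the $f(u_{i_2})\le f_{i_2-1}+1$ and Lemma 3.6(1) estimates so that the coefficients collapse to $P$; a careless chain run from $f_{i_2+1}$ instead yields only the strictly larger coefficient $2^{m+n-i_1+1}-2^{m+n-i_2-1}$, which fails to give (ii) once $i_2-i_1\ge 2$.

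Finally, part (iii) follows from (ii) by a short estimate. The bound in (ii) equals $2^{i_1-1}P-(2^{t}-1)P$; since $(2^{t}-1)P\ge 0$ it is at most $2^{i_1-1}P=3\cdot 2^{m+n-2}+2^{m+n+i_1-i_2-2}$, and $i_2\ge i_1+1$ makes this at most $3\cdot 2^{m+n-2}+2^{m+n-3}=7\cdot 2^{m+n-3}$. For $n\ge 3$ one has $2^{m}-1\le 2^{m+n-3}$, which is equivalent to $7\cdot 2^{m+n-3}\le 2^{m+n}-2^{m}+1$, and the claim follows.
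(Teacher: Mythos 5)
Your parts (ii) and (iii) are correct and, despite the different packaging, essentially reproduce the paper's route. Your head/middle/tail expansion of $f(G)=\sum_{j}2^{m+n-j}r_j$ with the single defective coordinate $r_{i_2+1}\le f(u_{i_1})-f_{i_2-1}$ is, after substituting $f(u_{i_2})\le f_{i_2-1}+1$ and $f_{i_2-1}\le 2^{i_2-i_1}(f_{i_1-1}+1)-1$, algebraically the same as the paper's chain $f(G)\le 2^{m+n-(i_2+1)}(f_{i_2+1}+1)$ combined with its Inequality (3), $f_{i_2+1}\le 3f_{i_2-1}+f_{i_1-1}+3$; both collapse to $(f_{i_1-1}+1)\bigl(3\cdot 2^{m+n-i_1-1}+2^{m+n-i_2-1}\bigr)$, and your bound $A+1\le 2^{i_1-1}-(2^t-1)$ is exactly the paper's Inequality (1). (Your parenthetical warning that a chain "run from $f_{i_2+1}$" fails is misplaced: the paper's chain does start from $f_{i_2+1}$ and succeeds, precisely because Inequality (3) already carries the defect.) Your observation that $m+n\notin S_2$ is a legitimate point the paper leaves implicit in its use of Lemma 2.4.

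Part (i), however, has a genuine gap. The census "at most $n+1$ ones, at least $m-1$ nonpositive" does \emph{not} yield $f(G)\le 2^{m+n}-2^m+1$ by rearrangement: what Lemma 2.5 actually licenses as the extremal arrangement is all ones at the front, and $\sum_{j=1}^{n+1}2^{m+n-j}=2^{m+n}-2^{m-1}>2^{m+n}-2^m+1$ for $m\ge 2$. The comparison in Lemma 3.7 works only because there the $m-1$ nonpositive coordinates sit at the positions of $V_0\setminus\{u_{k_0}\}$, all of which \emph{precede} $k_0$, so the extra one is structurally pinned after every nonpositive entry --- which is why $\bar f$ has its extra one at the last position, not the front. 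In your situation the structure is reversed: the extra one is at $i_1$, \emph{before} the $m-1-t$ strictly negative coordinates, and nothing pins it late. Even feeding in your sharper bound $r_i\le -f(u_{i_1})\le -1$ after $i_1$, the sorted-decreasing arrangement gives only $2^{m+n}-2^{m-1}-(2^{m-1-t}-1)$, which exceeds the target $2^{m+n}-2^m+1$ by $2^{m-1}-2^{m-1-t}$, i.e.\ whenever $t\ge 1$. The paper closes exactly this hole with a two-stage estimate you skipped: first bound the prefix, $f_{i_1}\le 2^{i_1}-2^{t+1}+1$, exploiting that the $t$ nonpositive $V_0$-coordinates are confined to $[1,i_1-1]$ and therefore displace ones from the heavy slots \emph{within the prefix}; then expand the suffix, where every coordinate is either a potential one or $\le -1$, to get $f(G)\le 2^{m+n}+2^{m+n-i_1+1}-2^{m+n-i_1+t+1}-2^{m-t}+1$; and finally split into the cases $t=0$ or $n=i_1-t-1$ versus $t\ge 1$ and $n\ge i_1-t$. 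That positional information is essential, and your global-census shortcut discards it, so part (i) as written is unproved for $t\ge 1$.
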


%%%proof
\begin{proof}
Observe that, among $r_1,r_2, \cdots,r_{i_1-1}$, there are at most $(i_1-t-1)$ of them having the value one. By Lemma 2.2 and Lemma 2.5 we have that
\[\begin{array}{clr}
        f_{i_1-1}&= 2^{i_1-1}s_0+\sum_{j=1}^{i_1-1}2^{i_1-1-j}r_j\qquad\qquad\qquad\quad \\
                 &\leq 2^{i_1-1}\cdot 0+\sum_{j=1}^{i_1-t-1}2^{i_1-1-j}\cdot 1 \qquad\qquad\qquad\qquad\qquad\qquad\qquad 
\qquad\qquad\qquad\qquad\qquad\\
                 &=2^{i_1-1}-2^{t}\dotfill $(1)$
  \end{array}\]%\\
  \noindent
        and
\[\begin{array}{clr}
         f_{i_1}&=f_{i_1-1}+f(u_{i_1})\qquad\qquad\qquad\quad \\
                &=2f_{i_1-1}+1 \qquad\qquad\quad \qquad\quad\qquad\qquad\qquad \qquad\qquad \qquad\qquad \quad\qquad \qquad\qquad\qquad\qquad \\
                &\leq 2^{i_1}-2^{t+1}+1.\dotfill $(2)$
         \end{array}\]\\
(i) Consider the case $s'_{0}=f_{i_1}$ and $s'_{j}=f(u_{i_{1}+j})$  for  $j\in [1, m+n-i_1]$. Let $r'_{j}= s'_{j}-\sum_{\ell=0}^{j-1}s'_{\ell}$, then $r'_j\leq 1$.  %for  $j\in [1, m+n-i_1]$
Furthermore, if $S_2=\emptyset$, that is, $f(u_{i_1+j})\leq f_{{i_1}+j-1}-f(u_{i_1})$ for all $u_{i_1+j} \in V_0$, then $f(u_{i_1+j})-\sum_{i=1}^{{i_1}+j-1}f(u_i) \leq -f(u_{i_1})\leq -1$. Since $r'_j\leq -1$ for all $j \in \{j\leq m+n-i_1 \mid u_{i_1+j} \in V_0\}$ , there are at most $(n-(i_1-t-1))$ 1's in the values of $r'_1,r'_2, \cdots,r'_{m+n-i_1}$.  By Lemma 2.2, 2.5 and Inequality (2), we have

       \[\begin{array}{clr}%&=f_{i_{1}}+\sum_{j=1}^{m+n-i_{1}}f(u_{i_{1}+j})
        f(G)&=\sum_{j=0}^{m+n-i_1}s'_{j}\\
            &=2^{m+n-i_{1}}f_{i_1}+\sum_{j=1}^{m+n-i_1}2^{m+n-i_1-j}r'_{j}\\
%\qquad\qquad\qquad\quad \\
            &\leq 2^{m+n-i_{1}}(2^{i_1}-2^{t+1}+1)+\sum_{j=1}^{n-(i_1-t-1)}2^{m+n-i_1-j}\cdot 1+\\
            &\quad \sum_{j=n-(i_1-t-1)+1}^{m+n-i_1}2^{m+n-i_1-j}(-1)\\
%\qquad\qquad\qquad\quad \\
            &\leq 2^{m+n-i_{1}}(2^{i_1}-2^{t+1}+1)+2^{m+n-i_1}-2^{m-t-1}-(2^{m-t-1}-1)\\
%\qquad\qquad\qquad\quad \\
            &\leq 2^{m+n}+2^{m+n-i_1+1}-2^{m+n-i_1+t+1}-2^{m-t}+1.
       \end{array}\]\\
Note that $n\geq i_1-t-1\geq 0$ and $t\geq 0$. If $n=i_1-t-1$ or $t=0$, then $f(G)\leq 2^{m+n}-2^{m}+1$. Otherwise, $n\geq i_1-t$ and $t\geq 1$, then  $2^{m+n-i_1+1}-2^{m+n-i_1+t+1}\leq 2^{m+n-i_1+t}-2^{m+n-i_1+t+1}= -2^{m+n-i_1+t}\leq -2^m$.
              This implies that
                 $f(G) \leq 2^{m+n}-2^{m}-2^{m-t}+1<2^{m+n}-2^{m}+1$ and the result in (i) is asserted.\\
(ii) If $S_2 \neq \emptyset$, then $f(u_{i_2})> f_{{i_2}-1}-f(u_{i_1})$. By Lemma 2.4 we have that $f(u_{i_2+1}) \leq f(u_{i_1})+ f(u_{i_2})$ and then
 \[\begin{array}{cll}
        f_{i_{2}+1}&=f_{i_{2}-1}+f(u_{i_{2}})+f(u_{i_{2}+1})\\
                   &\leq %f_{i_{2}-1}+f(u_{i_{2}})+f(u_{i_{1}})+f(u_{i_{2}})\\&=
                   f_{i_{2}-1}+2f(u_{i_{2}})+f(u_{i_{1}})\\
                   &\leq f_{i_{2}-1}+2(f_{i_{2}-1}+1)+(f_{i_{1}-1}+1)\quad \mbox{ }\mbox{ }\qquad\qquad \qquad\qquad\qquad\qquad\qquad \qquad\qquad\qquad\\
                   &=3f_{i_{2}-1}+f_{i_{1}-1}+3.\dotfill $(3)$
                  % &3[2^{i_2-i_1}(f_{i_1-1}+1)-1]+f_{i_{1}-1}+3
                  % &=(3\times 2^{i_{2}-i_{1}}+1)f_{i_{1}-1}+3\times 2^{i_{2}-i_{1}}.
        \end{array}\]
Now, the desired upper bound on $f(G)$ can be derived as follows.
\[\begin{array}{cll}
                f(G)&=f_{m+n} \leq 2^{m+n-(i_{2}+1)}(f_{i_2+1}+1)\qquad \qquad\qquad\qquad\qquad\qquad \mbox{ }\mbox{ }({\textnormal{by Lemma 3.6(1)}})\\
                 &\leq 2^{m+n-(i_{2}+1)}[(3f_{i_{2}-1}+f_{i_{1}-1}+3)+1]\qquad\qquad\qquad\qquad\qquad \mbox{ }({\textnormal{by Inequality (3)}})\\
                 &\leq 2^{m+n-(i_{2}+1)}[3(2^{i_{2}-i_{1}}(f_{i_{1}-1}+1)-1)+f_{i_{1}-1}+4]\qquad \qquad\quad \mbox{ }({\textnormal{by Lemma 3.6(1)}})   \\
                 &= 2^{m+n-(i_{2}+1)}[(3\cdot 2^{i_{2}-i_{1}}+1)(f_{i_{1}-1}+1)]\\
                 &= 2^{m+n-(i_{2}+1)}(3\cdot 2^{i_{2}-i_{1}}+1)(2^{i_{1}-1}-(2^t-1)) \qquad \quad \qquad\qquad \mbox{ }({\textnormal{by Inequality (1)}})\\
                 &= 3\cdot2^{m+n-2}+2^{m+n-(i_{2}-i_{1})-2}-(2^t-1)(3\cdot 2^{m+n-i_{1}-1}+2^{m+n-i_2-1})\\
                 &= 2^{m+n}-2^{m+n-2}(1-2^{-(i_2-i_1)})-(2^t-1)(3\cdot 2^{m+n-i_1-1}+2^{m+n-i_2-1}) \dotfill $(4)$ \\
        \end{array}\]\\
(iii) Since  $i_{2}-i_{1} \geq 1$, $2^{m+n-2}(1-2^{-(i_2-i_1)})\geq 2^ {m+n-3}$. It follows that $f(G) \leq 2^ {m+n}-2^ {m+n-3} < 2^{m+n}-2^{m}+1$ when $n\geq 3$. The proof is completed.

\end{proof}

We have shown that $f(G)\leq 2^{m+n}-2^{m}+1$ is valid in many cases. With some further discussion, the final conclusion can be achieved.

%%% Theorem 3.9
\noindent
\begin{thm}
$M(K_{1(n),m})= 2^{m+n}-2^{m}+1$.
\end{thm}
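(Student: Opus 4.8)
The plan is to prove the equality by matching bounds. The lower bound $M(K_{1(n),m}) \geq 2^{m+n}-2^{m}+1$ is exactly Corollary 3.2, so the whole problem reduces to the reverse inequality $f(G) \leq 2^{m+n}-2^{m}+1$ for an arbitrary maximal IC-coloring $f$ of $G=K_{1(n),m}$ (with $m,n\geq 2$). First I would fix such an $f$, relabel the vertices as $u_1,\dots,u_{m+n}$ with $f(u_1)<\cdots<f(u_{m+n})$ --- permissible because Proposition 3.5 guarantees the values are pairwise distinct --- and introduce the increments $r_i=f(u_i)-\sum_{j<i}f(u_j)$ together with the index $k_0=\max\{\,j\mid u_j\in V_0\,\}$, exactly as in Lemmas 3.7 and 3.8.

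The upper bound is then obtained by a case analysis driven by the signs of the increments $r_i$ attached to the vertices of $V_0$. If every $V_0$-vertex except the last, $u_{k_0}$, satisfies $r_i\leq 0$, then Lemma 3.7 delivers $f(G)\leq 2^{m+n}-2^{m}+1$ at once. Otherwise the set $S_1$ is nonempty; I would then appeal to Lemma 3.8, whose part (i) closes the case $S_2=\emptyset$ and whose part (iii) closes the case $S_2\neq\emptyset$ provided $n\geq 3$. Putting these together, the desired bound is already established for all maximal colorings when $n\geq 3$, and for $n=2$ in every situation except the single configuration $S_1\neq\emptyset$ and $S_2\neq\emptyset$.

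That leftover configuration, $n=2$ with both $S_1$ and $S_2$ nonempty, is the crux, and it genuinely escapes the earlier estimates: Inequality (4) is too weak here, since when the parameter $t$ of Lemma 3.8 is $0$ and $i_2-i_1=1$ it only yields $f(G)\leq 2^{m+2}-2^{m-1}$, which exceeds the target $2^{m+2}-2^{m}+1$ for every $m\geq 2$. The remedy is to exploit the rigidity special to $n=2$: the partite set $V_1$ is merely $K_2$, so the induced connected subgraphs contained in $V_1$ realize only the three sums $1,2,3$, while $V_0$ is independent, so any induced connected subgraph that meets $V_0$ in at least two vertices is forced to borrow a vertex of $V_1$. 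I would use this in one of two complementary ways: either sharpen the derivation behind Inequality (4) by splitting on the sizes of $t$ and $i_2-i_1$ and re-applying Lemma 2.4 to a second non-adjacent pair inside $V_0$ so as to harvest an extra saving; or, more robustly, combine the count of $3\cdot 2^{m}+m$ induced connected subgraphs from Lemma 3.4 with the collision principle of Lemma 2.6, exhibiting at least $m-1$ pairs of distinct connected subgraphs with a common $f$-value, which forces $f(G)\leq (3\cdot 2^{m}+m)-(m-1)=2^{m+2}-2^{m}+1$.

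I expect this production of collisions (equivalently, the fine bookkeeping that replaces Inequality (4)) to be where all the difficulty resides: one must follow how the three mandatory sums $1,2,3$ coming from $V_1$ interlock with the subset sums of the independent set $V_0$, and show that any attempt to drive $f(G)$ past $2^{m+2}-2^{m}+1$ duplicates enough connected-subgraph sums to be penalized back to the target. Once the $n=2$ analysis is in place, assembling it with the $n\geq 3$ cases and Corollary 3.2 gives $M(K_{1(n),m})=2^{m+n}-2^{m}+1$, as claimed.
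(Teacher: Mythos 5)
Your reduction is exactly the paper's: Corollary 3.2 for the lower bound, Proposition 3.5 to justify the strict ordering $f(u_1)<\cdots<f(u_{m+n})$, Lemma 3.7 when all $V_0$-increments except at $u_{k_0}$ are nonpositive, and Lemma 3.8(i),(iii) to dispose of $S_2=\emptyset$ and of $n\geq 3$. Your diagnosis of the residual case is also accurate, including the sharp observation that Inequality (4) with $t=0$, $i_2-i_1=1$ only gives $2^{m+2}-2^{m-1}$, which overshoots the target. But at precisely the point you yourself identify as ``where all the difficulty resides,'' the proposal stops being a proof and becomes a plan with two candidate strategies, neither of which is carried out, and the second of which I do not believe survives scrutiny. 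The collision-counting route would need you to \emph{exhibit} $m-1$ pairs of distinct induced connected subgraphs with equal $f$-values for an arbitrary maximal coloring in the bad configuration, so that Lemma 2.6 applies; in Proposition 3.5 such pairs come mechanically from two vertices sharing a value, but here the values are already known to be distinct, and nothing in the configuration $n=2$, $S_1,S_2\neq\emptyset$ manufactures coincident sums. Establishing that any coloring exceeding $3\cdot 2^m+1$ forces $m-1$ collisions is not easier than the original problem --- it is the original problem in disguise.

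What the paper actually does in this case is concrete and none of it appears in your sketch: it rewrites (4) as a bound (their Inequality (5)) that kills $t\geq 2$ and $(t,i_1)$ with $t=1$, $i_1\leq 3$ outright, leaving only the four pairs $(i_1,t)\in\{(1,0),(2,0),(3,0),(4,1)\}$; for those it derives a sharper estimate (their Inequality (6)) from the fact that when $n=2$ at most three of $r_1,\dots,r_{i_2-1}$ equal $1$, so $f_{i_2-1}\leq 7\cdot 2^{i_2-4}$, which handles all large $i_2$; the small values of $i_2$ are then eliminated by an iterated interplay between Lemma 2.4 and the growth bound $f_i\geq 3\cdot 2^{i-2}$ of Lemma 3.6(2) (the growth bound forces $f(u_i)>f_{i-1}-f(u_{j})$ for a suitable non-neighbor $u_j$, Lemma 2.4 then caps $f(u_{i+1})$, and the cap contradicts the growth bound two steps later, or pins $m+n$ down to a small value where $f(G)$ is computed directly); finally, the pairs $(3,0)$ and $(4,1)$ are ruled out by a separate argument of a different flavor: there $V_1=\{u_1,u_2\}$ is forced, and the value $k=f(G)-f_2$ (resp.\ $f(G)-f_3$) can only be realized by the vertex set $V_0$ itself, which is disconnected. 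This last disconnectedness trick, the role of Lemma 3.6(2), and the reduction to four exceptional pairs are the actual content of the theorem's proof, and your proposal contains none of them; closest is your first strategy (``re-applying Lemma 2.4 to a second non-adjacent pair''), which points in the right direction but is stated at a level of generality (``harvest an extra saving'') that does not constitute an argument.
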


%%%proof
\begin{proof}
Let $f$ be a maximal IC-coloring of $G$. We adopt the notation used in Lemma 3.8. By Lemma 3.7 and Lemma 3.8, it suffices to show that $f(G)\leq 2^{m+n}-2^{m}+1$ holds when  $S_2 \neq \emptyset$ and $n=2$. First note that when $n=2$, we have $0\leq i_1-t-1\leq 2$ and the upper bound on $f(G)$ in (4) can be rewritten as
\[\begin{array}{clr}
        f(G)&\leq
        2^{m+2}-2^m(1-2^{-(i_2-i_1)}))-(2^t-1)(3 \cdot 2^{m-i_{1}+1}+2^{m-i_2+1})\\
        &=  (2^{m+2}-2^{m})+2^{m-(i_2-i_1)}-3\cdot2^{m-i_{1}+t+1}(1-2^{-t})+2^{m-i_2+1}(2^t-1) \qquad\qquad \quad\qquad \\
        &=  2^{m+2}-2^{m}-3\cdot2^{m-(i_{1}-t-1)}(1-2^{-t})+2^{m-(i_2-i_1)}(1-2^{-i_{1}+1}(2^t-1)).\dotfill $(5)$\\
        \end{array}\] \\
(1)  If $t\geq 2$, then
      \[ \begin{array}{cll}
        f(G) & \leq & (2^{m+2}-2^{m})-3\cdot2^{m-2}(1-\frac{1}{4})+2^{m-1}\\
             %& \leq & (2^{m+2}-2^{m})-\frac{9}{4}\times 2^{m-2}+2^{m-1}\\
             & < & 2^{m+2}-2^{m}+1.
 \end{array}\] \\
(2) If $t=1$ and $i_1\leq 3$, then
\[\begin{array}{cll}

 f(G) %&\leq & (2^{m+2}-2^{m})-3\times2^{m-s}(1-2^{-t})+2^{m-(i_{2}-i_{1})}(1-2^{-s}+2^{-(s+t)})\\
  & \leq & (2^{m+2}-2^{m})-3\cdot2^{m-1}(1-2^{-1})+2^{m-1}(1-2^{-2}(2-1))\\
 & = & (2^{m+2}-2^{m})+(3/4-3/2)\cdot 2^{m-1}\\
  & < & 2^{m+2}-2^{m}+1.
  \end{array}\]\\
(3) We have so far consider all but the following four cases: $(i_1,t)=(1,0)$,$(2,0)$, $(3,0)$,$(4,1)$.
Let us have a closer investigation of the value $f(G)$ again before literally starting the discussion of these cases.
Now, consider the the situation where $s_{0}=f_0$, $s_{i}=f(u_i)$ and $s_{i}=\sum_{j=0}^{i-1}s_{j}+r_{i}$ for $i\in [1, i_{2}-1]$. From the definition of $i_1$ and $i_2$, we see that $r_i \leq 0$ for all $i\in \{i\leq i_2-1 \mid u_i \in V_0\}\setminus\{i_1\}$. Since $n=2$,  among $r_1,r_2,\cdots,r_{i_2-1}$, there are at most three of them taking the value one.  We therefore have
\[\begin{array}{cll}
f_{i_{2}-1} & \leq & 2^{i_2-1}s_{0}+\sum_{j=1}^{i_2-1}2^{i_2-1-j}\cdot r_j\\
 & \leq & 2^{i_{2}-2}+2^{i_{2}-3}+2^{i_{2}-4}\\
 & = & 7\cdot 2^{i_{2}-4}.
 \end{array}\]

Making use of this fact and Inequality (1), an upper bound on $f_{i_{2}+1}$ can be derived from Inequality (3) as follows.
\[\begin{array}{cll}
             f_{i_{2}+1}&\leq 3f_{i_{2}-1}+f_{i_{1}-1}+3 \\
                       & \leq 21\cdot2^{i_{2}-4}+2^{i_1-1}-2^{t}+3.
 \end{array}\]
This leads to an upper bound on $f(G)$.
\[\begin{array}{cll}
                f(G)& < 2^{m+2-(i_{2}+1)}(f_{i_2+1}+1) \mbox{ }\qquad\qquad\qquad\qquad \qquad\qquad \qquad\qquad({\textnormal{by Lemma 3.6(1)}})\\
                 &= 2^{m+2-(i_{2}+1)}(21\cdot2^{i_{2}-4}+2^{i_{1}-1}-2^{t}+4)\\
                   &=(21\cdot 2^{m-3})+ [2^{m-(i_{2}-i_1)}- 2^{m+1-i_{2}+t}+2^{m+3-i_{2}}]\\
                    &=(2^{m+2}-2^m-3\cdot2^{m-3})+[2^{m-(i_2-i_1)}(1-2^{-i_1+t+1}+2^{-i_1+3})]\dotfill $(6)$\\
  \end{array}\]%\\

Now, let us have the discussion for the remaining four cases.\\

\noindent
 {\bf Case 1.} $(i_1,t)=(1,0)$.%\\

In this case, $f(u_{i_1})=f(u_1)=1$. If $i_2\geq 5$, then Inequality (6) gives \\
\[\begin{array}{cll}
 f(G)%&\leq &(2^{m+2}-2^{m})-3\times2^{m-3}+2^{m-(i_{2}-i_{1})}(1-2^{-s}+2^{2-(s+t)})\\
 &< &(2^{m+2}-2^{m})-3\cdot2^{m-3}+2^{m-4}(1-2^{0}+2^{2})\\
  & < & 2^{m+2}-2^{m}+1.
   \end{array}\]
 % \noindent

When $i_2=4$, since $f(u_2)=2>f_1-f(u_1)$ and $f(u_3)> f(u_2)=f_2-f(u_1)$, we have $\{u_2,u_3\}\subseteq V_1$ by the definition of $i_2$. So, $V_1=\{u_2,u_3\}$ and  $V_{0}=\{u_i \mid i \in \{1\}\cup[4,m+n] \}$. Now, $f(u_3)\leq f_2+1=4$ and $f_3\leq 7$. 
Since $i_2=4$, we know that $f(u_4)>f_3-f(u_1)$ holds.
Lemma 2.4 guarantees that $m+n\geq 5$ and $f(u_5)\leq f(u_1)+f(u_4)\leq 9$, giving $f_5=f_3+f(u_4)+f(u_5)\leq f_3+(f_3+1)+f(u_5)\leq 7+8+9\leq 24$.
Now, suppose that $m+n\geq 6$, then the fact $f_6 \geq 3\cdot 2^{6-2}=48$ from Lemma 3.6(2) implies that $f(u_6) \geq 48-f_5 \geq 24$, which means $f(u_6) >f_5-f(u_1)$. One can then obtain from Lemma 2.4 that $m+n\geq 7$ and $f(u_7)\leq f(u_1)+f(u_6)\leq 1+(f_5+1)=26$. However, this leads to
 $f_{7}= f_5+f(u_6)+f(u_7)  \leq 24 +25+26 = 75 < 3\cdot2^{7-2}$, contradicting to Lemma 3.6(2). We therefore conclude that the only possible situation is "$m+n=5$" and $f(G)=f_5\leq 24 < 2^{m+2} -2^m+1$.

When $i_2=3$, from the definition of $i_2$, we have $u_2\in  V_1$ and also $f(u_4)=f(u_{i_2+1})\leq f(u_{i_1})+f(u_{i_2})=f(u_1)+f(u_3)$ by Lemma 2,4.
Now, $2=f(u_2)<f(u_3)\leq f_2+1=4$. From Lemma 3.6(2), we see that $f_4\geq 3\cdot 2^{4-2}=12$ and thus $12\leq f_4=f_2+f(u_3)+f(u_4)\leq 3+f(u_3)+(f(u_1)+f(u_3))=2f(u_3)+4$ which implies that $f(u_3)=4$. It follows that $f_3=7$ and $f(u_4)\leq f(u_1)+f(u_3)= 5$ by Lemma 2.4 because $f(u_3)>f_2-f(u_1)$ and $u_1u_3 \notin E(G)$. Since $f(u_4)>f(u_3)=4$, we see that $f(u_4)=5$ and $f_4=12$.
Suppose that $m+n \geq 5$, then Lemma 3.6(2) gives $f_{5} \geq  3\cdot 2^{5-2}=24$. One can see that $f(u_5) =f_5-f_4 \geq 24-12=12>9=f(u_4)+f(u_3)$ and $f(u_4)>f_3-f(u_3)$. Lemma 2.4 then guarantees that $u_4$ must be in $V_1$. We therefore conclude that
$V_1=\{u_2,u_4\}$ and  $V_{0}=\{u_i \mid i \in \{1,3\}\cup[5,m+n] \}$.
Next, since  $f(u_{5})\geq 12>f_4-f(u_1)$, we have from Lemma 2.4 that $m+n\geq 6$ and $f(u_{6})\leq f(u_{1})+f(u_{5})\leq 1+(f_4+1)\leq 14$. However, this leads to $f_{6} \leq  f_4+f(u_5)+f(u_6)\leq 12+(12+1)+14 = 39< 3\cdot2^{6-2}$ which is a contradiction to Lemma 3.6(2). Therefore the only possible situation is "$m+n=4$" and then $f(G)=f_{4}=12\leq 2^{m+2}-2^m+1$.

If $i_2=2$, then $f(u_{i_1})=f(u_1)=1$ and $f(u_{i_2})=f(u_2)=2=f_1+1$. Since $f(u_3)=f(u_{i_2+1})\leq f(u_{i_1})+f(u_{i_2})=3$ and $f(u_3)>f(u_2)=2$, we have $f(u_3)=3$ and then $f(u_{3})> f_{2}-f(u_1)$. In addition, the fact $f_{4} \geq  3\cdot 2^{4-2}=12 $ implies that  $f(u_{4})=f_4-f_3 \geq 12-6=6>4 =f(u_3)+f(u_1)$. From Lemma 2.4, we see that $u_3$ must be in $V_1$. When $m=2$, $f(G)=f_{4} \leq  1+2+3+(f_3+1) = 13 = 2^{m+2}-2^m+1$.
Next, we consider the situation when $m\geq 3$.
Note that the inequality $f_5\geq 3\cdot 2^{5-2}=24$ gives $f(u_{5})=f_5-f_4 \geq 24-13=11>8 \geq f(u_1)+f(u_4)$. Along with the fact that $f(u_4)\geq 6> 5=f_3-f(u_1)$, one can see that $u_4$ must also be in $V_1$ from Lemma 2.4. Therefore, we have $V_1=\{u_3,u_4\}$ and $V_0=\{u_i~|~i \in \{1,2\}\cup [5,m+n]\}$. Suppose that $f(u_5) \geq 12$, then $f(u_5)>11\geq f_4-f(u_2)$. Lemma 2.4 guarantees that  $m+n\geq 6$ and $f(u_{6})\leq f(u_{2})+f(u_{5})\leq 2+(f_4+1)\leq 16$. However, these facts lead to $f_6=f_4+f(u_5)+f(u_6)\leq 13+14+16=43 < 3\cdot 2^{6-2}$, contradicting to Lemma 3.6(2). Hence, $f(u_5)=11$ is asserted. Now, suppose again that $m+n\geq 6$. The inequality $f_6\geq 3\cdot 2^{6-2}=48$ from Lemma 3.6(2) implies $f(u_{6})=f_6-f_5 \geq 48-24=24> f_5-f(u_1)$. So we have from Lemma 2.4 that $m+n\geq 7$ and $f(u_{7})\leq f(u_{1})+f(u_{6})\leq 1+(f_5+1)\leq 26$. However, this gives $f_{7}  \leq  f_5+f(u_6)+f(u_7)\leq f_5+(f_5+1)+26\leq  75 <  3\cdot2^{7-2}$, contradicting to Lemma 3.6(2) again. Therefore, "$m+n=5$" is the only possible situation and then $f(G) = f_5=24 <  2^{m+2}-2^{m}+1$. \\

\noindent
 {\bf Case 2.} $(i_1,t)=(2,0)$.%\\

 In this case, $f(u_1)=1$, $f(u_2)=2$ and $u_1\in V_1$. If $i_2\geq 5$, then the upper bound on $f(G)$ in (6) can be rewritten as \\
$$f(G)\leq (2^{m+2}-2^{m})-3\cdot2^{m-3}+2^{m-3}(1-2^{-1}+2^{1}) <  2^{m+2}-2^{m}+1.$$
%\[\begin{array}{cll}
% f(G)%&\leq &(2^{m+2}-2^{m})-3\times2^{m-3}+2^{m-(i_{2}-i_{1})}(1-2^{-s}+2^{2-(s+t)})\\
% &\leq &(2^{m+2}-2^{m})-3\cdot2^{m-3}+2^{m-3}(1-2^{-1}+2^{1})\\
%  & < & 2^{m+2}-2^{m}+1.
%   \end{array}\]
%  \noindent

If $i_2= 4$, then since $f(u_{3})> 1= f_2-f(u_{2})$, one can see that $u_3$ must be in $V_1$ from the definition of $i_2$. Therefore we have $V_1=\{u_1,u_3\}$ and  $V_{0}=\{u_i \mid i \in \{2\}\cup[4,m+n] \}$. Note that $f(u_{4})\leq f_3+1=8$. From the definition of $i_2$ and  Lemma 2.4, we have  $f(u_{5})\leq f(u_2)+f(u_4) = 2+ f(u_4)$. Now, $3\cdot 2^{5-2} \leq f_5= f_3+f(u_4)+f(u_5)\leq (1+2+4)+f(u_4)+(2+f(u_4))$. This implies that $f(u_4)=8=f_3+1$ and $f(u_5)\leq 10$. Hence, $ f(u_5)>f(u_4)>7 \geq f_3=f_4-f(u_4)$.
It follows from Lemma 2.4 that $f(u_6)\leq f(u_4)+f(u_5)\leq 8+10=18$ and then $f_6=f_4+f(u_5)+f(u_6)=15+10+18=41< 3\cdot 2^{6-2}$ which contradicts to Lemma 3.6(2). We therefore conclude that $i_2=4$ can never occur in this case.

If $i_2=3$, then $u_1\in V_1$ and $f(u_{3})\leq f_2+1=4$. From the definition of $i_2$ and by virtue of Lemma 2.4, we have  $f(u_{4})\leq f(u_2)+f(u_3)\leq 6$. Suppose that $m+n\geq 5$. Since Lemma 3.6(2) gives that $f_4 \geq 3\cdot 2^{4-2}=12$ and $f_4\leq 1+2+f(u_3)+(f(u_2)+f(u_3))=5+2f(u_3)$, we obtain that $f(u_3)=4$. Now, the inequality $f_5\geq 3\cdot 2^{5-2}=24$ from Lemma 3.6(2) leads to $f(u_5)=f_5-f_4\geq 24-(1+2+4+6)=11>f(u_3)+f(u_4)$. One can be sure that $u_4$ must be in $V_1$ by Lemma 2.4 because $f(u_4)>f(u_3)=4>f_3-f(u_3)$ and $f(u_3)=f_2+1$. Hence, we have $V_1=\{u_1,u_4\}$ and $V_0=\{u_i~|~i \in \{2,3\}\cup [5,m+n]\}$. In addition, since $f(u_5)\geq 11> f_4-f(u_3)$, we know from Lemma 2.4 that  $m+n\geq 6$ and $f(u_6)\leq f(u_3)+f(u_5)\leq 4+(f_4+1)\leq 18$. These facts together lead to $f_{6}=f_4+f(u_5)+f(u_6)  \leq  13+14+18=45<3\cdot 2^{6-2}$ which contradicts to Lemma 3.6(2). We therefore conclude that the inequality $m+n\geq 5$ is impossible to be true in Case 3. So, $m+n=4$ and then $f(G)=f_4 \leq 13=2^{m+2}-2^m+1$.  \\

\noindent
{\bf Case 3.} $(i_1,t)=(3,0)$.\\

In this case, $V_1=\{u_1, u_2\}$ can be easily seen from the definition of $i_1$. So, $V_0=\{u_i|i \in [3,m+n]\}$. Now, we have $f(u_1)=1$, $f(u_2)=2$ and $f(u_3)= f_2+1=4$. Let $k=f(G)-(f(u_1)+f(u_2))$ and let $H$ be an induced subgraph of $G$
  such that $f(H)=k$. If $V_0\setminus V(H) \neq \emptyset$, we denote as $i_0$ the minimum element in $\{ i \mid u_i \in V_0\setminus V(H)\}$, then $i_0\geq 3$ and $V(H)\subseteq V_1\cup \{u_i \mid i\geq i_0+1\}$. Therefore,
$f(H)\leq f(u_1)+f(u_2)+\sum_{i=i_0+1}^{m+n}f(u_i)\leq f(u_1)+f(u_2)+\sum_{i=4}^{m+n}f(u_i) < \sum_{i=3}^{m+n}f(u_i)=f(H)$
, giving a contradiction. We then have that $V_0\subseteq V(H)$. Besides, since $\sum_{i=3}^{m+n}f(u_i)=k$, we conclude that $V(H)=V_0$ and $H$ is disconnected. Therefore, Case 3 is impossible to occur. \\

\noindent
{\bf Case 4.} $(i_1,t)=(4,1)$.\\

In this case, $V_1=\{u_1, u_2\}$ and $V_0=\{u_i \mid i \in [3,m+n]\}$ are also true.  Now, $f(u_1)=1$, $f(u_2)=2$ and $f(u_3)= f_2=3$ and  $f(u_4)= f_3+1= 7$. Let $k=f(G)-(f(u_1)+f(u_2)+$ $f(u_3))$. In exactly the same way as we used in Case 3, one can show that Case 4 is not possible either.   \\

Since $f(G)\leq 2^{m+2}-2^m+1$ is valid in all possible situations. The proof is completed.

\end{proof}

%\end{proof}

\section{Conclusion}
\label{sec:4}

%\ \ \ \ \
In this work, we have given lower bounds on the IC-index of all complete multipartite graphs and  
have shown that our lower bound on $M(K_{1(n),m})$ is the exact value of it. 
Our coloring constructed in Proposition 3.1 is indeed a qualified maximal IC-coloring.

For further study of this problem, one can try to show that the lower bound given in Proposition 3.3 will also be an upper bound 
on $M(K_{1(n),m_{\ell},m_{\ell-1}, \cdots,m_2,m_1})$ for all $\ell\ge 2$. 
We conjecture that the inequality in Proposition 3.3 is in fact an equality in the case where $\ell \ge 2,~ m_{\ell}\geq 2 ~\mbox{and}~n\geq 2$.

%Apart from this conjecture, the problem of determining the exact value of the IC-index of any complete $\ell$-partite graph, where $\ell\geq 3$, is still open. This problem certainly is also worth trying.


\begin{thebibliography}{99}


\bibitem{AB} R. Alter, J. A. Brnett, A postage stamp problem, {\it  Amer.
 Math. Monthly} 87(1980), 206-210 .
 \bibitem{Guy} R. Guy,  The Postage Stamp Problem, {\it Unsolved Problems in
 Number Theory}, second ed. 123-127, Springer, New York, 1994.
 \bibitem{HL} R. L. Heimer, H. Langenbach, The stamp problem, {\it J.
 Recreational Math.} 7(1974), 235-250.
 \bibitem{LW} C. T. Long, N. Woo, On bases for the set of integers,
{\it  Duke Math. J.} 38(1971), 583-590.
 \bibitem{Lun} W. F. Lunnon, A postage stamp problem, {\it Comput. J.}
 12(1969), 377-380.
 \bibitem{Mos} S. Mossige, The postage problem: an algorithm to
 determine the $h$-range of the $h$-range formula on the extremal
 basis problem for $k=4$, {\it Math. Comput.} 69(2000), 325-337.
 \bibitem{Pen} S. G. Penrice, Some new graph labeling problems: a
 preliminary report, {\it DIMACS Tech. Rep.} 95-26(1995), 1-9.
 \bibitem{SLK} E. Salehi, Sin-Min Lee, M. Khatirinejad, IC-colorings
 and IC-indices of graphs,  {\it Discrete Math.} 299(2005), 297-310.
 \bibitem{Sel} E. S. Selmer, On the postage stamp problem with the
 three stamp denominations,  {\it Math. Scand.} 47(1980), 29-71.
 \bibitem{SKM} R. G. Stanton, J. G. Kalbfleisch, R. C. Mullin, Some
 tables for the postage stamp problem,  {\it Proceedings of 4th Manitoba
 Conference on Numerical Math.}, Utilitas Math. Pub., Winnipeg, (1974), 351-356.
 \bibitem{SF} C. L. Shiue and H. L. Fu, The IC-indices of complete bipartite graphs,
{\it Electronic J. of Combinatorial} 15(2008), R43.
 \bibitem{St} A. St$\ddot{o}$hr, Gel$\ddot{o}$ste und
 ungel$\ddot{o}$ste Fragen uber Basen der naturlichen Zahlenreihe I,
 II,  {\it J. Reine Angew. Math.} 194(1955), 111-140.
\bibitem{west}
D. B. West,  {\it Introduction to graph theory}, Prentice-Hall, Upper
Saddle River, NJ, 2001.


\end{thebibliography}
\end{document}